\theoremstyle{plain}
\newtheorem{thm}{Theorem}[section]
\newtheorem{lemma}[thm]{Lemma}
\newtheorem{cor}[thm]{Corollary}
\newtheorem{prop}[thm]{Proposition}
\theoremstyle{definition}
\newtheorem{defn}[thm]{Definition}
\newtheorem{example}[thm]{Example}
\newtheorem{rem}[thm]{Remark}
\numberwithin{equation}{section}
\newcommand{\interior}[1]{{\kern0pt#1}^{\mathrm{o}}}
\begin{document}

%\linenumbers

\title{NJ-symmetric Rings}

%----------Author 1
\author[Sanjiv Subba]{Sanjiv Subba $^\dagger$}

\address{$^\dagger$School of Applied Sciences\\ UPES, Dehradun\\  Bidholi 248007\\ India}
\email{sanjivsubba59@gmail.com}
%----------Author 2
\author[Tikaram Subedi]{Tikaram Subedi  {$^{\dagger *}$}}
\address{$^{\dagger * }$Department of Mathematics\\  National Institute Of Technology  Meghalaya\\ Shillong 793003\\ India}
\email{tikaram.subedi@nitm.ac.in}

%\author[A. M. Buhphang]{A. M. Buhphang  {$^{\dagger \dagger }$}}
%\address{$^{\dagger\dagger  }$Department of Mathematics\\ North-Eastern Hill University\\ Shillong 793022\\ India}
%\email{ardeline@nehu.ac.in}
%$\thanks{*Corresponding author}$
%----------classification, keywords, date
\subjclass[2010]{Primary 16U80; Secondary 16S34, 16S36}.

\keywords{symmetric rings, semicommutative rings, NJ-symmetric rings}

\begin{abstract}
 We call a ring $R$ NJ-symmetric if $abc\in N(R)$ implies $bac\in J(R)$ for any $a,b,c\in R$. Some classes of rings that are NJ-symmetric include left (right) quasi-duo rings, weak symmetric rings, and abelian J-clean rings. We observe that if $R/J(R)$ is NJ-symmetric, then $R$ is NJ-symmetric, and therefore, we study some conditions for NJ-symmetric ring $R$ for which $R/J(R)$ is symmetric. It is observed that for any ring $R$, $M_n(R)$ is never an NJ-symmetric ring for all positive integer $n>1$. Therefore,  matrix extensions over an NJ-symmetric ring is studied in this paper. Among other results, it is proved that  there exists an NJ-symmetric ring whose  polynomial extension is not NJ-symmetric.

\end{abstract}

\maketitle

\section{INTRODUCTION}
\quad 	All rings considered in this paper are associative with identity unless otherwise mentioned. $R$ represents a ring, and all modules are unital. The symbols $Z(R),~E(R),~ J(R),~N(R)$, $U(R)$, $T_n(R)$, $M_n(R)$, $N^{*}(R)$, and $N_{*}(R)$ respectively denote the set of all central elements of $R$, the set of all idempotent elements of $R$, the Jacobson radical of $R$, the set of all nilpotent elements of $R$, the set of all units of $R$, the ring of all $n\times n$  upper triangular matrices  over $R$, the ring of all $n\times n$ matrices over $R$, the upper nil radical of $R$, and the lower nil radical of $R$. For any $a\in R$, the notation $l(a)$ $\left(r(a)\right)$ stands for the left (right) annihilator of $a$. 

\quad A ring is said to be reduced if it contains no nonzero nilpotent elements. In order to unify sheaf representation of commutative rings and reduced rings, Lambek in \cite{lambek} introduced the notion of symmetric rings (according to him a ring $R$ is \textit{symmetric} if $abc=0$ implies $bac=0$ for any $a,~b,~c\in R$).
A ring $R$ is said to be \textit{semicommutative} (also \textit{IFP} (see \cite{Basic} )) $R$ if $ab=0$ implies $aRb=0$ for $a,b\in R$. Ever since the introduction of symmetric rings and semicommutative rings, they are extensively studied (see \cite{olwar}, \cite{Basic}, \cite{lambek}, \cite{owsr}, \cite{ gwsr}) and continue to be an active area of research.
Furthermore, it was also establised that semicommutative rings are proper generalization of symmetric rings (see \cite{Basic}).  Motivated by this observation, we introduce the notion of NJ-symmetric rings by leveraging the idea of symmetric ring property to study a generalize semicommutative rings. It turns out that many well-known classes of rings, such as left (right) quasi-duo rings, and weak symmetric rings, fall under the class of NJ-symmetric rings.

 \quad Recall that $R$ is said to be:
\begin{enumerate}
	
	\item   \textit{weak symmetric } (\cite{owsr}) if $abc\in N(R)$ implies $acb\in N(R)$ for any $a,~ b,~ c\in R$.
	
	\item  \textit{generalized weakly symmetric (GWS)} (\cite{ gwsr})  if for any $a,~b,~c\in R$, $abc=0$ implies $bac\in N(R)$.
	
	\item \textit{left (right) quasi-duo} (\cite{1s2id}) if every maximal left (right) ideal of $R$ is an ideal of $R$.

	\item  \textit{semiperiodic} (\cite{sp}) if for each $a\in R\setminus(J(R)\cup Z(R))$, $a^q-a^p\in N(R)$ for some integers $p$ and $q$ of opposite parity.
	
	\item 	\textit{left} (\textit{right}) $SF$ (\cite{rnsf})   if all simple left (right) $R$-modules are flat. 
	
	\item 	\textit{2-primal} (\textit{reduced} ) if $N(R)=N_{*}(R)$ $\left(N(R)=0\right)$.
\end{enumerate}

\section{Main Results}

\begin{defn}
	We call $R$ an $NJ-symmetric$ if for any $a,b,c\in R$ and $abc\in N(R)$, then $bac\in J(R)$.  
\end{defn}
Following the definition of an NJ-symmetric ring, we obtain the following proposition.
\begin{prop}\label{equi}
	The following are equivalent:
	\begin{enumerate}
		\item R is NJ-symmetric.
		\item $abc\in N(R)$ implies $acb\in J(R)$ for any $a,b,c\in R$.
		\item $abc\in N(R)$ implies $cba\in J(R)$ for any $a,b,c\in R$.
	\end{enumerate}
\end{prop}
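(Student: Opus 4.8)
The plan is to reduce everything to a single structural fact: the nilpotent set $N(R)$ is invariant under cyclic rotation of products, while the three conditions correspond precisely to the three transpositions of the positions of $a,b,c$. Since any one transposition together with a cyclic rotation generates the full symmetric group on three letters, and the rotation is automatically available on the hypothesis side, each of the three statements will convert into the others. Concretely, I would first record the elementary lemma that for any $x,y\in R$ one has $xy\in N(R)$ if and only if $yx\in N(R)$: if $(xy)^{n}=0$ then $(yx)^{n+1}=y(xy)^{n}x=0$, and symmetrically. Applying this with $x=ab$ and $y=c$ shows that $abc\in N(R)$ forces $cab=c(ab)\in N(R)$; hence any hypothesis of the form $abc\in N(R)$ may be freely replaced by $cab\in N(R)$. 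Note that this rotation is needed only for $N(R)$; the conclusions will be read off directly into $J(R)$, so no invariance of $J(R)$ is required.

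With this rotation in hand the implications become uniform, and I would arrange them as the cycle $(1)\Rightarrow(2)\Rightarrow(3)\Rightarrow(1)$, each step using the same rotation $abc\mapsto cab$ followed by one application of the assumed hypothesis to the triple $(c,a,b)$. For $(1)\Rightarrow(2)$, rotate $abc\in N(R)$ to $cab\in N(R)$ and apply $(1)$, which swaps the first two entries and yields $acb\in J(R)$. For $(2)\Rightarrow(3)$, again rotate to $cab\in N(R)$ and apply $(2)$, which swaps the last two entries and yields $cba\in J(R)$. For $(3)\Rightarrow(1)$, rotate to $cab\in N(R)$ and apply $(3)$, which reverses the triple and yields $bac\in J(R)$; this closes the cycle and gives the equivalence.

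I do not expect a genuine obstacle: the entire content is the cyclic invariance of $N(R)$, after which the three targets all flow from the single rotation $abc\mapsto cab$ together with one use of the hypothesis. The only point demanding care is the bookkeeping of which positions each condition permutes, namely the swap of the first two entries in $(1)$, the swap of the last two in $(2)$, and the full reversal in $(3)$. I would therefore verify each substitution $x=c,\ y=a,\ z=b$ explicitly to confirm that the permuted output matches the intended expression before invoking the corresponding hypothesis.
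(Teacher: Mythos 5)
Your proof is correct, and it is essentially the intended argument: the paper states this proposition without proof as an immediate consequence of the definition, and the key fact you isolate (that $xy\in N(R)$ iff $yx\in N(R)$, so hypotheses of the form $abc\in N(R)$ can be cyclically rotated) together with one application of the assumed condition is exactly what makes the three statements equivalent. All three permutation bookkeeping steps check out.
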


In a symmetric ring $R$, it is well-known that if $a_1a_2...a_n=0$, where $n$ is any positive integer, implies $a_{\sigma(r_1)}a_{\sigma(r_2)}...a_{\sigma(r_n)}=0$ for any permutation $\sigma$ of the set $\{1,2,..,n \}$ and $a_i\in R$.  So,  any symmetric ring is NJ-symmetric. However, the converse is not true (see the following example).
\begin{example}\label{eg}
	In view of Corollary \ref{upt}, $T_n(\mathbb{Z})$ is NJ-symmetric for any integer $n>1$, where $\mathbb{Z}$ is the ring of integers. Note that $e_{11}e_{21}=0$, $e_{21}e_{11}\neq 0$, where $e_{ij}$ denotes the matrix unit  in $T_n(\mathbb{Z})$ whose $(i,j)^{th}$ entry is $1$ and zero elsewhere. So, $T_n(\mathbb{Z})$ is not symmetric. 
\end{example}

$R$ is said to be \textit{J-clean} if for each $a\in R$, $a=e+j$ for some $e\in E(R)$ and $j\in J(R)$. If $E(R)\subseteq Z(R)$, then $R$ is said to be an abelian ring. An abelian ring need not be J-clean; for example, any field $F$ with $|F|>2$. 

For any $a\in R$,  
the \textit{commutant} of $a$ is defined by $comm(a)=\{y\in R~|~ ya=ay\}$ and $comm^2(a)=\{x\in R~|$ $yx=xy$ for all $y\in comm(a)  \}$ is called the  \textit{double commutant} of $a$.  $R$ is called $J-quasipolar$, if for any $a\in R$, $a+f\in J(R)$ for some $f^2=f\in comm^2(a)$.

\begin{thm}\label{qnjs}
	Each of the following classes of rings is NJ-symmetric:
	\begin{enumerate}
		\item \label{q} Left (right) quasi-duo rings.
		\item \label{j} Abelian J-clean rings.
		\item \label{jq} Abelian J-quasipolar rings.
		\item \label{g} GWS rings in which the index of nilpotent elements is at most 2. 
	\end{enumerate}

\end{thm}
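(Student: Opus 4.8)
The plan is to treat all four classes through one common mechanism and then verify its hypothesis case by case. Suppose $\pi\colon R\to S$ is a surjection of rings onto a reduced ring $S$ with $\ker\pi\subseteq J(R)$. If $abc\in N(R)$, then $(abc)^k=0$ for some $k$, so $\pi(abc)^k=0$, and since $S$ is reduced this forces $\pi(abc)=0$, i.e. $\pi(a)\pi(b)\pi(c)=0$. As reduced rings are symmetric, $\pi(b)\pi(a)\pi(c)=0$, that is $\pi(bac)=0$, whence $bac\in\ker\pi\subseteq J(R)$; hence $R$ is NJ-symmetric. It therefore suffices to exhibit such a quotient in each case: for (1)--(3) I take $S=R/J(R)$ and show it is reduced, and for (4) I take $S=R/N(R)$ after first proving $N(R)\subseteq J(R)$.

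Write $\overline{x}$ for the image of $x$ in $R/J(R)$. For (1), $R/J(R)$ is semiprimitive, and since every maximal left ideal of $R$ contains $J(R)$ and is two-sided by the (left) quasi-duo hypothesis, every maximal left ideal $\overline{M}$ of $R/J(R)$ is two-sided, so each $(R/J(R))/\overline{M}$ is a division ring; thus $R/J(R)$ embeds in a product of division rings and is reduced (and symmetrically in the right quasi-duo case). For (2), $a=e+j$ with $e\in E(R)$, $j\in J(R)$ gives $\overline{a}=\overline{e}$, an idempotent; as $a$ is arbitrary every element of $R/J(R)$ is idempotent, so $R/J(R)$ is Boolean, hence reduced. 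For (3), $a+f\in J(R)$ with $f^2=f$ gives $\overline{a}=-\overline{f}$, whence $\overline{a}^2=\overline{f}=-\overline{a}$; thus every element $y$ of $R/J(R)$ satisfies $y^2+y=0$, which forces $2y=0$ and then $y^2=y$, so $R/J(R)$ is again Boolean and reduced. (In fact the hypothesis ``abelian'' is not used in (2) or (3).)

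Part (4) is the substantive case, and I handle it by proving that $N(R)$ is a nil ideal. Every nilpotent has index at most $2$, so fix $x$ with $x^2=0$. Applying the GWS property to the zero products $r\cdot x\cdot x=0$ and $(ux)\cdot x\cdot v=0$ yields $xrx\in N(R)$ and, more generally, $xuxv\in N(R)$ for all $r,u,v\in R$. Taking $u=v=r$ gives $(xr)^2=xrxr\in N(R)$, so $xr$ is nilpotent; then $(rx)^{k+1}=r(xr)^k x$ shows $rx$ is nilpotent too. Hence $N(R)$ is closed under left and right multiplication by $R$, and in particular $xy\in N(R)$ whenever $x,y\in N(R)$. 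For additive closure, if $x^2=y^2=0$ then $(x+y)^2=xy+yx$ and $(x+y)^4=(xy)^2+(yx)^2$; since $xy$ and $yx$ lie in $N(R)$ and have index at most $2$, both terms vanish, so $(x+y)^4=0$. Thus $N(R)$ is a nil ideal, giving $N(R)\subseteq J(R)$ and reducedness of $R/N(R)$, and the mechanism of the first paragraph (with $S=R/N(R)$) finishes the proof.

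The main difficulty is concentrated in (4). The GWS hypothesis is weak---its conclusion only lands in $N(R)$ rather than at $0$---so it cannot be iterated directly; the index-$2$ assumption is exactly what turns each ``$\in N(R)$'' conclusion back into an equality of the form $(\cdot)^2=0$, and this is what drives both the multiplicative and, most delicately, the additive closure of $N(R)$, the identity $(x+y)^4=(xy)^2+(yx)^2$ being the linchpin. By contrast, (1) rests only on the standard structural fact that a semiprimitive quasi-duo ring is a subdirect product of division rings, and (2)--(3) are immediate once one notices that their hypotheses force $R/J(R)$ to be Boolean.
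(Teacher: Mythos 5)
Your proof is correct, and it takes a genuinely different and more unified route than the paper's. You reduce all four cases to a single mechanism --- exhibit a surjection onto a reduced (hence symmetric) ring whose kernel lies in $J(R)$ --- whereas the paper argues each case directly from the definition: for (1) it works element-wise with maximal left ideals (showing one of $a,b,c$ lies in each such ideal, then using two-sidedness), for (2)--(3) it decomposes $a,b,c$ and manipulates the idempotents $e_1e_2e_3$ using the abelian hypothesis, and for (4) it runs an intricate chain of repeated GWS applications ending with $cbar\in N(R)$ for all $r$. Your approach buys several things: in (2) and (3) the observation that $R/J(R)$ is Boolean makes the ``abelian'' hypothesis visibly redundant, which the paper's idempotent computation does not reveal; and in (4) you obtain the stronger structural conclusion that $N(R)$ is a (nil) ideal, with the identity $(x+y)^4=(xy)^2+(yx)^2$ for square-zero $x,y$ doing the real work --- this is cleaner and more reusable than the paper's ad hoc product manipulations. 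The only external inputs you rely on are standard: reduced rings are symmetric (a fact the paper itself invokes elsewhere), nil ideals lie in the Jacobson radical, and a semiprimitive left quasi-duo ring is a subdirect product of division rings; for case (1) your argument and the paper's are essentially the same content in different packaging. All steps check out, including the GWS applications to $r\cdot x\cdot x=0$ and $(ux)\cdot x\cdot v=0$ and the passage from $(xr)^2\in N(R)$ to $xr\in N(R)$.
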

\begin{proof}
	\begin{enumerate}
		\item Let $R$ be a left quasi-duo ring, and  $M$ be any maximal left ideal of $R$. Suppose  $a,b,c\in R$ be such that $abc\in N(R)$. Suppose that  $a\notin M$. Then, $M+Ra=R$. This implies that $m_1+r_1a=1$ for some $m_1\in M,~r_1\in R$. So, $m_1bc+r_1abc=bc$, which leads to $bc\in M$ as $N(R)\subseteq J(R)$ in left quasi-duo rings (\cite[Lemma 2.3]{oqdr}). Suppose $b\notin M$. Then, $M+Rb=R$. This implies that $m_2c+r_2bc=c$ for some $m_2\in M,~r_2\in R$. So, $c\in M$. Thus, $a\in M$ or $b\in M$ or $c\in M$. Therefore, $bac\in J(R)$. Hence, $R$ is NJ-symmetric. Similarly, it can be shown  that $R$ is an NJ-symmetric ring whenever $R$ is a right quasi-duo ring.
		
		\item \label{jc} Let $a,b,c\in R$ with $abc\in N(R)$. By J-cleanness, we have $a=e_1+j_1,~b=e_2+j_2,~c=e_3+j_3$, where $e_i\in E(R),~j_k\in J(R)$. Then, $abc=e_1e_2e_3+j_4$ for some $j_4\in J(R)$. This implies that $(abc)^n=(e_1e_2e_3+j_4)^n=0$ for some  positive integer $n$. This leads to $e_1e_2e_3\in E(R)\cap J(R)$ as $R$ is abalian. So, $e_1e_2e_3=0$. Therefore, $bac\in J(R)$.
		\item The proof is similar to (\ref{jc}).
		
		\item Let $abc\in N(R)$. Then, $barcb(abc)^2barcb=0$ for any $r\in R$. Since $R$ is GWS, $abcbarcbabcbarcb\in N(R)$. So, $abcbarcb\in N(R)$, that is, $bcbarcba\in N(R)$. So, $rc(bcbarcba)^2rc=0$. This implies that $bcbarcbarcbcbarcbarc\in N(R)$, that is, $(cbar)^2cb\in N(R)$. Since 
		
		$ar((cbar)^2cb)^2ar=0$, $cbar\in N(R)$. Hence, $cba\in J(R)$. By Proposition \ref{equi}, $R$ is NJ-symmetric. 
		
	\end{enumerate}
\end{proof}

\begin{example}\label{mat}
	Let $R$ be an arbitrary ring, and $n\geq 2$ be any integer. Take $a=e_{11}+e_{21},~b=e_{22},~c=e_{12}+e_{22}\in M_n(R)$, where $e_{ij}$ stands for the matrix unit in $M_n(R)$ whose $(i,j)^{th}$ entry is $1$ and zero elsewhere. Clearly, $abc=0\in N(M_n(R))$. Observe that $bac=e_{22}\notin J(M_n(R))$. So, $M_n(R)$ is not NJ-symmetric.
\end{example}

The condition ``GWS" for the ring $R$ in Theorem \ref{qnjs} (\ref{g}) cannot be dropped. By Example \ref{mat}, $M_2(\mathbb{Z}_2)$ is not NJ-symmetric. Observe that $M_2(\mathbb{Z}_2)$ is not GWS. Moreover, for any $x\in N(M_2(\mathbb{Z}_2))$, $x^2=0$.

There exists an NJ-symmetric ring that is not left (or right) quasi-duo  (see the following example).
\begin{example}
	By \cite[Example 2(ii)]{1s2id}, $\mathbb{H}[x]$ is not right quasi-duo, where $\mathbb{H}$ is the Hamilton quaternion over the field of real numbers.
	Since $\mathbb{H}[x]$ is reduced, it is NJ-symmetric. 
\end{example}

Nevertheless, in Theorem \ref{melt}, we will observe that an NJ-symmetric MELT ring is left (right) quasi-duo. A ring $R$ is said to be $MELT$ if every maximal essential left ideal is an ideal.

\begin{lemma}\label{meltlem}
	Let M be a maximal left (right) ideal of R, which is not essential. If R is NJ-symmetric, then M is an ideal.
\end{lemma}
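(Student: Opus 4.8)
The plan is to prove the statement for a maximal left ideal $M$; the right-ideal case follows by the dual argument, reading Proposition~\ref{equi} from the other side. The first move is to exploit non-essentiality to split off a complement. Since $M$ is a maximal left ideal that is not essential, there is a nonzero left ideal $L$ with $M \cap L = 0$, and maximality forces $M + L = R$, so $R = M \oplus L$ as left $R$-modules. From this decomposition I would extract an idempotent by writing $1 = u + e$ with $u \in M$, $e \in L$; a short check (using that $M$ and $L$ are left ideals, so the projections are right multiplications) shows $e^2 = e$, $L = Re$, and $M = R(1-e) = \{x \in R : xe = 0\}$. Moreover $Re \cong R/M$ is a \emph{simple} left $R$-module because $M$ is maximal.

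Next I would reduce the two-sidedness of $M$ to a single semicentrality condition. Every $x \in M$ satisfies $x = x(1-e)$, so for $r \in R$ one has $xre = x\bigl((1-e)re\bigr)$; hence $M$ is closed under right multiplication (and so is a two-sided ideal) as soon as $(1-e)Re = 0$. Thus the whole problem comes down to proving $(1-e)re = 0$ for every $r \in R$.

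This is where NJ-symmetry enters. For a fixed $r$, the element $er(1-e)$ is square-zero, since $e(1-e) = (1-e)e = 0$, and therefore lies in $N(R)$; Proposition~\ref{equi} then yields $(1-e)re \in J(R)$. On the other hand $(1-e)re = re - ere \in Re$, and $Re \cap J(R)$ is a left submodule of the simple module $Re$. It cannot equal $Re$, for that would place the idempotent $e$ in $J(R)$, forcing $e = 0$ and contradicting $L \neq 0$; hence $Re \cap J(R) = 0$ and $(1-e)re = 0$, as required.

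The main obstacle, and really the crux of the argument, is recognizing that the ``off-diagonal'' element $(1-e)re$ can be pinned down from two independent directions simultaneously: NJ-symmetry, applied to the square-zero rearrangement $er(1-e)$, forces it inside $J(R)$, while the module structure forces it inside the simple module $Re$, and simplicity together with $J(R) \cap E(R) = 0$ annihilates the intersection. The delicate bookkeeping is setting up the idempotent so that $M = \{x : xe = 0\}$, and choosing the rearrangement $er(1-e)$ (rather than $(1-e)re$) to feed into Proposition~\ref{equi}, so that the $cba$ form returns exactly the element $(1-e)re$ that lives in $Re$.
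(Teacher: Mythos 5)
Your proof is correct. It shares the essential setup with the paper's proof --- both begin by writing the non-essential maximal left ideal as $M = l(e)$ for a nonzero idempotent $e$, and both conclude by feeding a product annihilated via $(1-e)e=0$ into the NJ-symmetric hypothesis --- but the two arguments are organized differently. The paper argues by contradiction: assuming $Mr_0 \not\subseteq M$, it produces $m_0, m_1 \in M$ with $m_0r_0 + m_1 = 1$, multiplies by $e$ to get $m_0r_0e = e$, notes $r_0m_0e = 0 \in N(R)$, and then NJ-symmetry places $e = m_0r_0e$ itself in $J(R)$, which kills $e$ immediately. You instead prove directly that $e$ is left semicentral, i.e.\ $(1-e)Re = 0$ (which is the standard criterion for $l(e)$ to be two-sided), by applying Proposition~\ref{equi} to the square-zero element $er(1-e)$ and then cutting $(1-e)re$ down to zero inside $Re \cap J(R)$. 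Your route costs one extra observation --- that $Re \cap J(R) = 0$, which you get from simplicity of $Re \cong R/M$ together with the fact that a nonzero idempotent cannot lie in $J(R)$ (equivalently, from $J(R) \subseteq M$) --- but it buys a cleaner, non-contradictory statement and isolates the structural fact ($e$ left semicentral) that is really being proved. Both are valid; the paper's is shorter because it engineers the element landing in $J(R)$ to be $e$ itself.
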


\begin{proof}
	Let $R$ be an NJ-symmetric ring and $M$ a maximal left ideal of $R$, which is not essential. So, $M=l(e)$ for some $e~(\neq 0)\in E(R)$. Assume, if possible, that $M$ is not an ideal. Then, $Mr_0\not\subseteq M$ for some $r_0\in R$. So, $m_0r_0+m_1=1$ for some $m_0, m_1\in M$. This implies that $m_0r_0e+m_1e=e$. Since $M=l(e)$, $m_0re=e$. Observe that $r_0m_0e=0$. Since $R$ is NJ-symmetric, $m_0r_0e\in J(R)$. Hence, $e\in J(R)$, that is, $e=0$, a contradiction. Therefore, $M$ is an ideal of $R$.
	Similarly, one can prove the result for the right counterpart of $M$.
\end{proof}

\begin{thm}\label{melt}
	If $R$ is an NJ-symmetric MELT ring, then $R$ is left (right) quasi-duo.
\end{thm}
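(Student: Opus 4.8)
The plan is to reduce the theorem to a simple dichotomy on maximal left ideals, letting the two hypotheses handle the two cases. Recall that ``$R$ is left quasi-duo'' means precisely that every maximal left ideal of $R$ is a two-sided ideal, so it suffices to fix an arbitrary maximal left ideal $M$ of $R$ and show that $M$ is an ideal. I would therefore start the proof by fixing such an $M$ and splitting into the cases according to whether $M$ is essential as a left ideal.

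The key observation is that these two cases are covered by the two hypotheses \emph{separately}. If $M$ is essential, then the MELT assumption applies verbatim and immediately yields that $M$ is an ideal. If $M$ is not essential, then NJ-symmetry lets me invoke Lemma \ref{meltlem}, which states exactly that a non-essential maximal left ideal of an NJ-symmetric ring is an ideal. Since every maximal left ideal falls into one of these two cases, in both situations $M$ is an ideal, and so every maximal left ideal of $R$ is an ideal; that is, $R$ is left quasi-duo.

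For the right-hand assertion I would run the identical argument with ``left'' replaced by ``right'' throughout, using the right-sided conclusion of Lemma \ref{meltlem} (already proved there by symmetry) together with the right-sided analogue of the MELT condition. No new idea is needed; it is a verbatim transcription.

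I do not expect a genuine obstacle in this theorem: the real content has been front-loaded into Lemma \ref{meltlem}, and the statement here is essentially the case analysis that glues that lemma to the MELT hypothesis. The only point deserving care is verifying that the dichotomy is exhaustive and that the phrase ``maximal essential left ideal'' in the MELT definition lines up exactly with the essential case of the split, so that no maximal left ideal escapes both hypotheses.
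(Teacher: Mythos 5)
Your proposal is correct and matches the paper's argument, which simply cites Lemma \ref{meltlem}: the essential maximal left ideals are handled by the MELT hypothesis and the non-essential ones by the lemma, exhausting all cases. The only point worth noting is that the paper's MELT definition is stated for left ideals only, so the right-handed assertion does indeed require the right-sided analogue you mention.
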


\begin{proof}
	Follows from Lemma \ref{meltlem}.
\end{proof}

\begin{prop}
Let  $R$ be an NJ-Symmetric ring. If $R$ is exchange, then:\\
\begin{enumerate}
    \item $R$ is clean.
    \item $R$ is quasi-duo.
\end{enumerate}
\end{prop}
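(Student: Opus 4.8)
The plan is to reduce everything to the structure of the factor ring $R/J(R)$, the key point being that exchange together with NJ-symmetry forces $R/J(R)$ to be abelian. First I would record the crucial lemma: for every idempotent $e\in R$ and every $r\in R$ one has $er-re\in J(R)$. To see this, note that $er(1-e)$ and $(1-e)re$ both square to zero, since $e(1-e)=(1-e)e=0$, so each lies in $N(R)$. Writing $er(1-e)=e\cdot r\cdot(1-e)$ and applying Proposition \ref{equi}(3) gives $(1-e)re\in J(R)$, while writing $(1-e)re=(1-e)\cdot r\cdot e$ and applying the same statement gives $er(1-e)\in J(R)$. Since $er-re=er(1-e)-(1-e)re$, the claim follows. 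Because $R$ is exchange, idempotents lift modulo $J(R)$, so every idempotent of $R/J(R)$ is the image of an idempotent of $R$ and is therefore central; that is, $R/J(R)$ is abelian. (One also gets the easy fact $N(R)\subseteq J(R)$ from $x=x\cdot 1\cdot 1$, though it is not strictly needed here.)

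For part (1), I would argue as follows. Since $R$ is exchange, $R/J(R)$ is exchange and idempotents lift modulo $J(R)$. An abelian exchange ring is clean (Nicholson), so $R/J(R)$ is clean. Finally, cleanness lifts: given $a\in R$, choose a clean decomposition $\bar a=\bar e+\bar u$ in $R/J(R)$ with $\bar e$ idempotent and $\bar u$ a unit, lift $\bar e$ to an idempotent $e\in R$, and observe that $\overline{a-e}=\bar u$ is a unit, whence $a-e$ is a unit of $R$ because units are detected modulo $J(R)$. Thus $a=e+(a-e)$ is a clean decomposition and $R$ is clean.

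For part (2), I would first note that $R$ is (left, respectively right) quasi-duo if and only if $R/J(R)$ is, because every maximal one-sided ideal contains $J(R)$ and two-sidedness is preserved both under passing to $R/J(R)$ and under pulling back along $R\to R/J(R)$. So it suffices to show $R/J(R)$ is quasi-duo. Now $R/J(R)$ is abelian, exchange, and has zero Jacobson radical; by the structure theory of abelian exchange rings it is strongly regular (an abelian exchange ring is strongly regular modulo its radical, and here that radical is zero), and strongly regular rings are duo, hence left and right quasi-duo. Therefore $R$ is quasi-duo.

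The main obstacle is the first step: NJ-symmetry by itself does not make $R$ abelian (for instance $T_n(\mathbb{Z})$ is NJ-symmetric but not abelian, as in Example \ref{eg}), so the exchange hypothesis is essential — it is used precisely to lift idempotents and thereby upgrade the statement ``the image of every idempotent of $R$ is central in $R/J(R)$'' to ``$R/J(R)$ is abelian.'' Beyond that, I would be careful to cite the standard structural inputs correctly: that abelian exchange rings are clean, that cleanness passes from $R/J(R)$ to $R$ under idempotent lifting, and that an abelian exchange ring is strongly regular modulo its radical (so that, the radical being zero here, $R/J(R)$ is strongly regular and hence quasi-duo).
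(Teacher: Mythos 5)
Your proposal is correct and follows essentially the same route as the paper: both arguments hinge on lifting an idempotent modulo $J(R)$ via the exchange property and then using NJ-symmetry on the nilpotents $er(1-e)$ and $(1-e)re$ to conclude that $R/J(R)$ is abelian, from which cleanness and quasi-duo-ness follow. The only cosmetic difference is in part (2), where the paper cites \cite[Theorem 4.6]{quasi} directly while you argue through strong regularity of the abelian exchange ring $R/J(R)$; both are standard and valid.
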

\begin{proof}
\begin{enumerate}
    \item Let $\bar{e}\in E(R/J(R))$.  Since idempotents lifts modulo $J(R)$ (see \cite[Corollary 2.4]{lift}), we can assume that $e\in E(R)$. As $R$ is NJ-Abelian, $er(1-e)$ and $(1-e)re\in  J(R)$ for all $r\in R$. So, $re-er\in J(R)$, that is, $R/J(R)$ is Abelian. So,  $R/J(R)$ clean. Therefore, $R$ is clean (by \cite[Proposition 7]{eui}).
    \item By (1), $R/J(R)$ is Abelian. Hence, $R$ is quasi duo ( by \cite[Theorem 4.6]{quasi}
\end{enumerate}

  \end{proof}

We skip the proof of the following trivial lemma.
\begin{lemma}\label{wseqv}
	The following are equivalent:
	\begin{enumerate}
		\item $R$ is weak symmetric.
		\item $abc\in N(R)$ implies $bac\in N(R)$ for any $a,~b,~c\in R$.
	\end{enumerate}
\end{lemma}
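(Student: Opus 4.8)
The plan is to reduce both implications to a single elementary fact about nilpotents, namely that for any $x,y\in R$ one has $xy\in N(R)$ if and only if $yx\in N(R)$. This holds because $(xy)^n=0$ forces $(yx)^{n+1}=y(xy)^nx=0$, and symmetrically. Applying this first with $x=a$, $y=bc$, and then again, I obtain that nilpotency of a product of three elements is invariant under cyclic rotation: for all $a,b,c\in R$, $abc\in N(R)$ iff $bca\in N(R)$ iff $cab\in N(R)$.

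Granting this cyclic invariance, both directions become one-line compositions of a rotation with the hypothesized transposition. For $(1)\Rightarrow(2)$, suppose $abc\in N(R)$. By cyclic invariance $bca\in N(R)$; since the weak symmetric hypothesis swaps the last two factors of any nilpotent triple, applying it to the triple $(b,c,a)$ yields $bac\in N(R)$, which is exactly $(2)$. For $(2)\Rightarrow(1)$, suppose $abc\in N(R)$; then $cab\in N(R)$ by cyclic invariance, and applying the hypothesis of $(2)$ (a swap of the first two factors) to the triple $(c,a,b)$ gives $acb\in N(R)$, which is the weak symmetric conclusion.

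Since the paper flags this as trivial, I expect no genuine obstacle. The only point requiring care is bookkeeping: one must verify that each transposition is applied to the correct ordered triple after the rotation, so that the composition of a $3$-cycle with a transposition produces the intended permutation. As both targets ($abc\mapsto bac$ and $abc\mapsto acb$) are single transpositions, and any transposition together with a $3$-cycle generates all of $S_3$, this verification is routine.
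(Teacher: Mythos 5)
Your proof is correct: the key fact that $xy\in N(R)$ iff $yx\in N(R)$ (via $(yx)^{n+1}=y(xy)^nx$) gives cyclic invariance of nilpotency for triple products, and composing a cyclic rotation with the hypothesized transposition yields each implication exactly as you compute. The paper omits the proof as trivial, and your argument is the standard one evidently intended.
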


In the following theorem, we use Lemma \ref{wseqv} freely.

\begin{thm}\label{bac}
	If $R$ is a weak symmetric ring, then $R$ in NJ-symmetric.
\end{thm}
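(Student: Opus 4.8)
The plan is to use Lemma~\ref{wseqv} to collapse the statement to a single containment. If $R$ is weak symmetric and $abc\in N(R)$, then Lemma~\ref{wseqv} already gives $bac\in N(R)$, so the only assertion the theorem makes beyond weak symmetry is that this $bac$ in fact lies in $J(R)$. Moreover $bac$ here is completely arbitrary among nilpotents: taking $a=c=1$ and $b=y$ shows that every $y\in N(R)$ occurs as such a $bac$. Hence, \emph{for weak symmetric rings}, being NJ-symmetric is exactly equivalent to the containment $N(R)\subseteq J(R)$. I would therefore isolate and prove this containment, after which the theorem is immediate: $abc\in N(R)$ forces $bac\in N(R)\subseteq J(R)$.

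To obtain $N(R)\subseteq J(R)$, I would prove that $N(R)$ is a nil two-sided ideal of $R$, since every nil ideal is quasi-regular and so lies in $J(R)$ (if $x\in N(R)$ and $N(R)$ is an ideal, then $rx\in N(R)$ is nilpotent for each $r$, whence $1-rx\in U(R)$ and $x\in J(R)$). The structural tool I would set up first is the full permutation property: if $a_1a_2\cdots a_m\in N(R)$, then $a_{\sigma(1)}\cdots a_{\sigma(m)}\in N(R)$ for every $\sigma\in S_m$. This follows from Lemma~\ref{wseqv}, which transposes the first two factors of a three-term product inside $N(R)$ (group $a_1a_2(a_3\cdots a_m)$ and apply it), together with the elementary fact that $uv\in N(R)$ if and only if $vu\in N(R)$; cyclic rotations and a single adjacent transposition generate all of $S_m$.

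The main obstacle is that the permutation property only rearranges a \emph{fixed} multiset of factors, so on its own it cannot promote $a\in N(R)$ to $ra\in N(R)$, nor $a,b\in N(R)$ to $a+b\in N(R)$ — precisely the closures required for $N(R)$ to be an ideal. (From $a^{n}=0$ one only gets nilpotent products having $n$ copies of $a$ and a few extra factors, never a balanced power $(ra)^{k}$, so additive and two-sided multiplicative closure genuinely need more than permuting one relation.) Establishing these closures, i.e.\ that a weak symmetric ring satisfies $N(R)=N^{*}(R)$, is the real content; here I would either invoke the corresponding structural result for weak symmetric rings from \cite{owsr} or run the finer argument that the ideal generated by a nilpotent is nil. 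Once $N(R)=N^{*}(R)$ is in hand, $N^{*}(R)$ is a nil ideal and hence $N(R)=N^{*}(R)\subseteq J(R)$, which closes the proof.
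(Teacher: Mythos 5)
Your reduction is sound as far as it goes: by Lemma~\ref{wseqv} the theorem does collapse to the single containment $N(R)\subseteq J(R)$, and your derivation of the full permutation property for products lying in $N(R)$ (an adjacent transposition from Lemma~\ref{wseqv} plus the cyclic fact $uv\in N(R)\Leftrightarrow vu\in N(R)$) is correct. The gap is in what you do with it. The ``obstacle'' you describe --- that from $a^{n}=0$ one ``never'' obtains a balanced power $(ra)^{k}\in N(R)$ --- is simply false: $a^{n}=0$ gives $a^{n}r^{n}=0\in N(R)$, a product of $n$ copies of $a$ and $n$ copies of $r$, and $(ra)^{n}$ is a rearrangement of exactly that multiset of factors, so the permutation property you have already established yields $(ra)^{n}\in N(R)$, i.e.\ $ra\in N(R)$ (likewise $ar\in N(R)$) for every $r\in R$. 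That observation alone closes the proof: $1-ar\in U(R)$ for all $r$, hence $a\in J(R)$, with no additive or ideal-theoretic closure of $N(R)$ needed anywhere. This interleaving of the $r$'s into $(abc)^{n}r^{n}=0$ is precisely the paper's argument, carried out one transposition at a time until it lands on $(bacr)^{n}\in N(R)$.

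Having wrongly dismissed that direct route, you reroute the proof through the much stronger claim $N(R)=N^{*}(R)$, and for that step --- which you yourself call ``the real content'' --- you offer only the alternatives ``invoke the corresponding structural result from \cite{owsr}'' or ``run the finer argument that the ideal generated by a nilpotent is nil,'' without supplying either. Multiplicative closure of $N(R)$ does follow from the permutation property as above, but additive closure does not: each word in the expansion of $(a+b)^{n+m}$ can be permuted into one containing $a^{n}$ or $b^{m}$ and is therefore nilpotent, yet a sum of nilpotent elements need not be nilpotent, so $N(R)=N^{*}(R)$ is genuinely unproved here (and the same issue defeats the ``ideal generated by a nilpotent is nil'' variant, which still requires controlling sums $\sum r_{i}as_{i}$). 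As written, the containment $N(R)\subseteq J(R)$ is never actually established, so the proposal does not prove the theorem.
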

\begin{proof}
	Let $a,b,c\in R$ be such that $(abc)^n=0$, where $n$ is a positive integer. Then, $\underbrace{ (abc)(abc)...(abc)}_\text{$n$-times}r^n=0$ for any $r\in R$. By Lemma \ref{wseqv}, $abcr\underbrace{(abc)(abc)....(abc)}_\text{$n-1$-times}r^{n-1}\in N(R)$, which leads to\\ $bacr\underbrace{(abc)(abc)...(abc)}_\text{$n-1$-times}r^{n-1}\in N(R)$. This further implies that \\ $abcrbacr\underbrace{(abc)(abc)...(abc)}_\text{$n-2$-times}r^{n-2}\in N(R)$. So, we obtain \\ $bacrbacr\underbrace{(abc)(abc)...(abc)}_\text{$n-2$-times}r^{n-2}\in N(R)$. Proceeding similarly, we obtain that $(bacr)^n\in N(R)$. So, $bac\in J(R)$. Therefore, $R$ is NJ-symmetric.
\end{proof}
\begin{rem}
    Semicommutative rings are NJ-symmetric (by \cite[Remark 1]{wsa}).
\end{rem}

There exists a ring that is NJ-symmetric but not weak symmetric, as illustrated in the following example.

\begin{example}\label{njbns}
  Let $K$ be a field and $D_n=K\langle x_n: x_n^{n+2}=0\rangle$, \\$R_n=\begin{pmatrix}
      D_n & x_nD_n\\
      x_nD_n & D_n
  \end{pmatrix}$. By \cite[Example 2]{wsa}  and \cite[Remark 1]{wsa}, $\prod\limits_{n=0}^{\infty}R_n$ is a quasi-duo ring which is not weak symmetric. Following Theorem \ref{qnjs} (\ref{q}), $\prod\limits_{n=0}^{\infty}R_n$ is NJ-symmetric.
\end{example}

\begin{thm}\label{rqj}
	If R/J(R) is NJ-symmetric, then $R$ is NJ-symmetric.
\end{thm}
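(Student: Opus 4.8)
The plan is to exploit the standard fact that the Jacobson radical of the quotient $R/J(R)$ vanishes, so that the NJ-symmetric hypothesis on $R/J(R)$ collapses into a cleaner statement. Write $\overline{R}=R/J(R)$ and let $x\mapsto \overline{x}$ denote the canonical projection. First I would record that $J(\overline{R})=J(R/J(R))=0$. Consequently, the NJ-symmetric property of $\overline{R}$ reads as follows: whenever $\overline{a}\,\overline{b}\,\overline{c}\in N(\overline{R})$, we have $\overline{b}\,\overline{a}\,\overline{c}\in J(\overline{R})=0$, which is precisely the condition $bac\in J(R)$.

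Next I would take arbitrary $a,b,c\in R$ with $abc\in N(R)$, so that $(abc)^n=0$ for some positive integer $n$. Applying the projection gives $(\overline{a}\,\overline{b}\,\overline{c})^n=\overline{(abc)^n}=\overline{0}$, hence $\overline{a}\,\overline{b}\,\overline{c}\in N(\overline{R})$; that is, nilpotency descends to the quotient. Invoking the NJ-symmetry of $\overline{R}$ then yields $\overline{b}\,\overline{a}\,\overline{c}\in J(\overline{R})$, and since $J(\overline{R})=0$ this forces $\overline{bac}=\overline{0}$, equivalently $bac\in J(R)$. As $a,b,c$ were arbitrary, $R$ is NJ-symmetric.

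I do not expect a genuine obstacle here: the argument is essentially a one-line transport across the quotient map, and the only points needing care are the two elementary facts that $J(R/J(R))=0$ and that the projection sends nilpotents to nilpotents. The reason the argument goes through so cleanly is structural: the conclusion in the definition of NJ-symmetric already lands inside $J$, which becomes the zero ideal in $R/J(R)$, so no information is lost when lifting that conclusion back to $R$.
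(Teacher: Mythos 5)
Your proposal is correct and follows essentially the same route as the paper's own proof: pass to $\overline{R}=R/J(R)$, note that nilpotency descends so $\overline{a}\,\overline{b}\,\overline{c}\in N(\overline{R})$, apply NJ-symmetry of the quotient to get $\overline{b}\,\overline{a}\,\overline{c}\in J(\overline{R})=0$, and pull back to $bac\in J(R)$. The paper states this more tersely, but the underlying argument is identical; your added remarks about $J(R/J(R))=0$ simply make explicit what the paper leaves implicit.
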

\begin{proof}
	Let $a,~b,~c\in R$ be such that $abc\in N(R)$. Since $R/J(R)$ is NJ-symmetric, $\bar{b}\bar{a}\bar{c}\in J(R/J(R))$. So, $bac\in J(R)$.              
	
\end{proof}

The converse of Theorem \ref{rqj} is not true (see Example \ref{frac}). However, Theorem \ref{splsf} provides some conditions under which the converse holds.

\begin{example}\label{frac}
	Let $R$ be the localization of the ring $\mathbb{Z}$ at  $3\mathbb{Z}$ and $S$ the quaternions over $R$, that is, a free $R$-module with basis $1,~i,~j,~k$ and satisfy $i^2=j^2=k^2=-1,~ij=k=-ji$. Clearly, $S$ is a non-commutative domain. So, $S$ is an NJ-symmetric ring.  By \cite[Example 3]{yh},  $J(S)=3S$, and $S/3S\cong M_2(\mathbb{Z}_3)$. Take $x=\begin{pmatrix}
		\bar{0} & \bar{1}\\
		\bar{0} & \bar{0}
	\end{pmatrix}$ and $y=\begin{pmatrix}
		\bar{0} & \bar{0}\\
		\bar{1} & \bar{1}
	\end{pmatrix}$. Observe that $yx^2\in N({M_2(\mathbb{Z}_3)})$, and $xyx\notin J({M_2(\mathbb{Z}_3)})$ as $J(M_2(\mathbb{Z}_3))=0$. Hence, $S/J(S)$ is not NJ-symmetric. Moreover, this also implies that a homomorphic image of an NJ-symmetric need not be NJ-symmetric.
\end{example}

\quad It is obvious that reduced rings are NJ-symmetric. We can infer from Example \ref{frac} that $R/J(R)$ need not be reduced  when $R$ is NJ-symmetric. In this context, we have the following theorem.

\begin{thm}\label{splsf}
	Let $R$ is an NJ-symmetric ring. Then, $R/J(R)$ is reduced if:
	\begin{enumerate}
		\item $R$ is semiperiodic.
		\item $R$ is left SF.
	\end{enumerate}
\end{thm}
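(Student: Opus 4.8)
The plan is to prove both parts from the single reduction that $R/J(R)$ is reduced if and only if $x^2\in J(R)$ implies $x\in J(R)$: a ring has no nonzero nilpotents precisely when it has no nonzero square-zero elements, and the image of $x$ in $\overline R:=R/J(R)$ is square-zero exactly when $x^2\in J(R)$. I would first record, for an arbitrary NJ-symmetric ring, two tools. \emph{(Annihilation criterion.)} If $yr\in N(R)$ for every $r\in R$, then $y\in J(R)$; indeed $yr$ and $ry$ are simultaneously nilpotent, so each $1-ry$ is a unit. This criterion is valid in every ring and is already the mechanism behind the last line of the proof of Theorem \ref{bac}. \emph{(Abelianness modulo $J$.)} For $e\in E(R)$ and $r\in R$ the elements $er(1-e)$ and $(1-e)re$ are square-zero, hence lie in $N(R)$, so Proposition \ref{equi} forces them into $J(R)$; therefore $er-re\in J(R)$ and $\overline R$ is abelian. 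The central case of the reduction is then immediate: if $x\in Z(R)$ with $x^2\in J(R)$, the image of $x$ generates a nil ideal of $\overline R$, which is $0$ since $J(\overline R)=0$.

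The reduction further simplifies by the annihilation criterion applied inside $\overline R$: to get $\overline x=0$ it suffices to show that $\overline x\,\overline r\in N(\overline R)$ for every $\overline r$. So assume, for a contradiction, that $\overline w:=\overline x\,\overline r_0$ is non-nilpotent for some $\overline r_0$; note $\overline x\,\overline w=\overline{x}^{\,2}\,\overline r_0=0$. For part (1) I would apply semiperiodicity to a non-central lift $w$ of $\overline w$ (one exists because $\overline w\notin J(\overline R)$; the degenerate case in which every lift of $\overline w$ is central is treated directly), obtaining $w^q-w^p\in N(R)$ with $p,q$ of opposite parity and hence $\overline w^{\,q}-\overline w^{\,p}\in N(\overline R)$. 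If $p=0$ then $\overline w$ is a unit and $\overline x\,\overline w=0$ already gives $\overline x=0$. If $p\ge 1$, then inside the commutative subring $\mathbb{Z}[\overline w]$ the relation $\overline w^{\,p}(\overline w^{\,q-p}-1)\in N(\overline R)$ produces, by the standard strongly $\pi$-regular construction, a nonzero idempotent $\overline e\in\mathbb{Z}[\overline w]$ that is a multiple of $\overline w$ and for which $\overline e\,\overline w$ is invertible in the corner $\overline e\,\overline R$. By abelianness $\overline e$ is central, and since $\overline e\in\overline w\,\overline R$ we get $\overline x\,\overline e=0$; but then $\overline e\,\overline w=\overline e\,\overline x\,\overline r_0=(\overline e\,\overline x)\overline r_0=0$, contradicting the invertibility of $\overline e\,\overline w$ in the nonzero corner. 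Hence every $\overline x\,\overline r$ is nilpotent and $\overline x=0$.

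For part (2) I would run the identical skeleton, the only change being the source of the idempotent: in place of the semiperiodic power-relation I would use that $R$ is left SF, i.e. that for every maximal left ideal $M$ and every $a\in M$ one has $a\in aM$ (equivalently $aR\cap M=aM$). Combining this flatness relation for the non-nilpotent $\overline w$ with the abelianness of $\overline R$, the goal is again to manufacture a nonzero central idempotent lying in $\overline w\,\overline R$ on which $\overline w$ acts invertibly; the contradiction $\overline e\,\overline w=\overline e\,\overline x\,\overline r_0=0$ then closes the argument exactly as before. Here I would lean on the structural results for left SF rings in \cite{rnsf}.

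The hard part will be precisely this extraction of the idempotent in part (2): unlike the semiperiodic case, left SF-ness does not hand one a power identity, so producing a von Neumann-type idempotent attached to $\overline w$ from the flatness of simple modules is the genuine obstacle. A secondary technical point, common to both parts, is that NJ-symmetry is a hypothesis on $R$ and not on $\overline R$---by Example \ref{frac} the quotient $\overline R$ need not be NJ-symmetric---so the semiperiodic and left SF hypotheses must be invoked on $R$ itself; this is also what rules out the $M_n$-type behaviour of Example \ref{mat} that would otherwise obstruct reducedness.
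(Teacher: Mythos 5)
Part (2) is where the proposal genuinely breaks down, and you flag the problem yourself: your plan is to ``manufacture a nonzero central idempotent lying in $\overline{w}\,\overline{R}$ on which $\overline{w}$ acts invertibly'' out of left SF-ness, and you admit that extracting this idempotent is an unresolved obstacle. It is not a gap you can expect to close: left SF-ness supplies no power identity and no idempotents (whether left SF rings are even von Neumann regular is a long-standing open problem), so the skeleton transplanted from part (1) has nothing to latch onto. The paper's proof of (2) uses a completely different, idempotent-free mechanism: for $w$ with $w^2\in J(R)$ and $w\notin J(R)$, it first shows $J(R)+Rr(w)\neq R$ --- if $1=x+\sum r_is_i$ with $x\in J(R)$ and $s_i\in r(w)$, then each $s_ir_iw$ is square-zero, so NJ-symmetry puts $r_is_iw\in J(R)$ and forces $w\in J(R)$, a contradiction --- and then embeds $J(R)+Rr(w)$ in a maximal left ideal $H$ and applies the left SF property (\cite[Lemma 3.14]{rnsf}: $a\in H$ implies $a\in aH$) twice, once to $w^2$ to get $w\in H$ and once to $w$ to get $1\in H$. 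As written, your part (2) is a plan rather than a proof, and the plan is the wrong shape.

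Part (1) is a genuinely different route from the paper's and is essentially workable, but it has one real gap: the idempotent $\overline{e}$ produced by the strongly $\pi$-regular construction lives in $\overline{R}=R/J(R)$, whereas your ``abelianness modulo $J$'' tool only shows that idempotents \emph{of $R$} have central images; an idempotent of $\overline{R}$ that is not known to lift is not covered, and you need centrality of $\overline{e}$ to pass from $\overline{x}\,\overline{e}=0$ to $\overline{e}\,\overline{x}=0$. The repair is to run the construction in $R$ itself: NJ-symmetry gives $N(R)\subseteq J(R)$, and $w^{p}(w^{q-p}-1)\in N(R)$ is genuinely nilpotent \emph{in} $R$, so the commutative computation in $\mathbb{Z}[w]$ already yields an honest $e\in E(R)\cap w\mathbb{Z}[w]$ with $w^{N}=ew^{N}$, whose image is then central by your tool; the contradiction $\overline{e}\,\overline{w}=\overline{e}\,\overline{x}\,\overline{r}_0=0$ against $\overline{w}^{N}=\overline{e}\,\overline{w}^{N}\neq 0$ then goes through (and your ``degenerate'' case is in fact trivial: if every lift of $\overline{w}$ is central then $\overline{w}$ is central and $\overline{w}^{2}=(\overline{x}\,\overline{w})\overline{r}_0=0$). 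With that fix, your argument is a self-contained alternative to the paper's, which instead invokes \cite[Lemma 2.6]{sp} for the dichotomy $J(R)=N(R)$ or $J(R)\subseteq Z(R)$ and \cite[Lemma 2.3(iii)]{sp} for a ready-made idempotent $e=wy$ with $w^{p}=w^{p}e$, killed by the same NJ-symmetry trick $ew(1-e)y\in J(R)$; yours trades those citations for the Azumaya construction, at the cost of more bookkeeping.
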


\begin{proof}
	
	\begin{enumerate}
		\item \label{dic} Let $R$ be an NJ-symmetric semiperiodic ring. Take $w\in R$ satisfying $w^2\in J(R)$. We shall show that $w\in J(R)$. Since $R$ is NJ-symmetric ring, $N(R)\subseteq J(R)$. Hence by \cite[Lemma 2.6]{sp}, either $J(R)=N(R)$ or $J(R)\subseteq Z(R)$. If $J(R)=N(R)$, then $w\in J(R)$. Suppose $N(R)\neq J(R)$. Then, $w^2\in Z(R)$. If $w\in Z(R)$, then $\bar{w}(R/J(R))\bar{w}=0$. As $R/J(R)$ is semiprime, $\bar{w}=0$, that is, $w\in J(R)$. Suppose $w\notin J(R)\cup Z(R)$.  By \cite[Lemma 2.3(iii)]{sp}, there exist $e\in E(R)$ and a positive integer $p$ satisfying $w^p=w^pe$ and $e=wy$ for some $y\in R$. Hence $e=ewy=ew(1-e)y+ewey=ew(1-e)y+ew^2y^2$. Note that $we(1-e)y=0$. As $R$ is NJ-symmetric, $ew(1-e)y\in J(R)$. Hence $e\in J(R)$, that is, $e=0$. This yields that $w^p=0$, a contradiction to $w\notin N(R)\subseteq J(R)$. Thus, $R/J(R)$ is reduced.
		
		\item 	Let $R$ be an NJ-symmetric left SF-ring. By \cite[Proposition 3.2]{rnsf}, $R/J(R)$ is left SF. Suppose there exists $w\in R$ such that $w^2\in J(R)$ and $w\notin J(R)$. Assume, if possible, $J(R)+Rr(w)=R$, then  $1=x+\sum\limits^{finite}r_is_i,~ x\in J(R),~ r_i\in R,~ s_i\in r(w)$. Then, $w=xw+\sum\limits^{finite}r_is_iw$. Observe that $s_ir_iw\in N(R)$. As $R$ is NJ-symmetric, $r_is_iw\in J(R)$. This implies that $w\in J(R)$, a contradiction. Hence, $J(R)+Rr(w)\neq R$. There exist some maximal left ideal $H$ satisfying $J(R)+Rr(w)\subseteq H$. Note that $w^2\in H$. By \cite[Lemma 3.14]{rnsf}, $w^2=w^2x$ for some $x\in H$, that is, $w-wx\in r(w)\subseteq H$. So, $w\in H$. Hence, there exists $y\in H$ satisfying $w=wy$, that is, $1-y\in r(w)\subseteq H$. This implies that $1\in H$, a contradiction. Therefore, $R/J(R)$ is reduced. 
	\end{enumerate} 
	
\end{proof}

\begin{cor}\label{c1}
	If $R$ is an NJ-symmetric semiperiodic ring, then $R/J(R)$ is commutative.
\end{cor}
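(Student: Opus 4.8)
The plan is to build on Theorem~\ref{splsf}(1). Since $R$ is NJ-symmetric and semiperiodic, that result gives that $\bar R := R/J(R)$ is reduced; in particular every nilpotent element of $R$ lies in $J(R)$, so passing to $\bar R$ kills all of $N(R)$. The strategy is first to show that every element of $\bar R$ is either central in $\bar R$ or \emph{potent} (i.e. satisfies $\bar a^{\,n} = \bar a$ for some integer $n \geq 2$), and then to deduce commutativity of $\bar R$ from this dichotomy together with reducedness.

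For the first step, take $a \in R$ and consider $\bar a \in \bar R$. If $a \in J(R)$ then $\bar a = 0$, and if $a \in Z(R)$ then $\bar a$ is central in $\bar R$; in either case $\bar a$ is central. Otherwise $a \notin J(R) \cup Z(R)$, so semiperiodicity yields integers $p,q$ of opposite parity with $a^{q} - a^{p} \in N(R) \subseteq J(R)$, i.e. $\bar a^{\,q} = \bar a^{\,p}$ in $\bar R$. Assuming $1\le p < q$, so $q-p$ is odd, I would exploit reducedness as follows: from $\bar a^{\,p}(\bar a^{\,q-p} - \bar 1) = 0$ and the fact that $\bar a$ commutes with $\bar a^{\,q-p} - \bar 1$, the element $u = \bar a(\bar a^{\,q-p} - \bar 1)$ satisfies $u^{p} = \bar a^{\,p}(\bar a^{\,q-p}-\bar 1)^{p} = 0$, whence $u = 0$ because $\bar R$ has no nonzero nilpotents. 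This gives $\bar a^{\,q-p+1} = \bar a$ with $q-p+1 \geq 2$ (the borderline case $p=0$, where $\bar a^{\,q}=\bar 1$, is handled the same way via $\bar a^{\,q+1}=\bar a$). Thus every element of $\bar R$ is central or potent.

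To finish, I would use that a reduced ring is a subdirect product of domains $D_i$; the central-or-potent dichotomy descends to each $D_i$ (images of central elements are central, images of potent elements are potent, and every element of $D_i$ is such an image), and in a domain a nonzero potent element satisfies $\bar a^{\,n-1}=1$, hence is a root of unity. It then suffices to prove that a domain $D$ in which every element is central or a root of unity is commutative. Supposing not, pick a non-central root of unity $x$. Characteristic $0$ is excluded quickly: for every integer $c$ the element $c + x$ is again non-central, hence a root of unity lying in the number field $\mathbb{Q}(x)$; but a number field contains only finitely many roots of unity, contradicting that the $c+x$ are pairwise distinct. Hence $\operatorname{char} D = p$, and one checks that every element of $D$ is algebraic over $\mathbb{F}_p$ (non-central elements are roots of unity, and a central $y$ is algebraic because $y + x$ is a root of unity commuting with the root of unity $x$). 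A domain algebraic over a finite field is a division ring, which by Jacobson's theorem is commutative, contradicting the choice of $x$. Therefore each $D_i$, and hence $\bar R$, is commutative.

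I expect the main obstacle to be this last step: passing from the purely elementwise dichotomy ``central or potent'' to genuine commutativity. This is exactly where semiperiodicity must do its work to rule out noncommutative reduced quotients such as the one appearing in Example~\ref{frac}, and the delicate part is the characteristic split, where excluding characteristic $0$ relies on the finiteness of roots of unity in a number field and the characteristic $p$ case rests on Jacobson's commutativity theorem for division rings algebraic over a finite field.
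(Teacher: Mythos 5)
Your argument is correct, but it takes a genuinely different route from the paper. The paper's proof is essentially a two-line citation: it observes that $R/J(R)$ is again semiperiodic, invokes Theorem~\ref{splsf}(1) to get reducedness, and then quotes Bell--Yaqub \cite[Theorem 4.4]{sp} (commutativity of reduced semiperiodic rings) to conclude. You instead re-derive that commutativity theorem from scratch: the reduction to the ``central or potent'' dichotomy in $\bar R$ (your computation $u=\bar a(\bar a^{\,q-p}-\bar 1)$, $u^p=0$, hence $u=0$ by reducedness is exactly right), then the subdirect decomposition of a reduced ring into domains, and finally a characteristic split resting on finiteness of roots of unity in a number field and on Jacobson's theorem for division rings algebraic over $\mathbb{F}_p$. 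What the paper's approach buys is brevity and reliance on an existing result tailored to semiperiodic rings; what yours buys is self-containedness and transparency about where semiperiodicity enters. One remark: your final step can be shortened considerably, since ``every element is central or potent'' immediately gives that $x^{n(x)}-x$ is central for every $x$ (it is $0$ in the potent case and trivially central in the central case), so Herstein's classical commutativity theorem applies directly, avoiding the subdirect-product and case analysis entirely. A minor point of care: the definition of semiperiodic should be read with $p,q$ positive (as in Bell--Yaqub), which you implicitly assume when forming $\bar a^{\,p}(\bar a^{\,q-p}-\bar 1)$; your parenthetical about $p=0$ is then unnecessary.
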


\begin{proof}
	Since $R/J(R)$ is semiperiodic, by Theorem \ref{splsf} (\ref{dic}) and \cite[Theorem 4.4]{sp}, $R/J(R)$ is commutative.
\end{proof} 

$R$ is said to be regular ( strongly regular ) if for each $a\in R$, $a\in aRa ~(a\in a^2R)$.	
\begin{cor}\label{c2}
	If $R$ is an NJ-symmetric left SF semiperiodic ring,  then $R$ is a commutative regular ring.
\end{cor}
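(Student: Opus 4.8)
The plan is to push everything down to $R/J(R)$ and then show that the radical actually vanishes. First I would invoke Corollary \ref{c1}: since $R$ is NJ-symmetric and semiperiodic, $R/J(R)$ is commutative (and reduced, by Theorem \ref{splsf}). Hence the entire difficulty is to prove $J(R)=0$, for then $R=R/J(R)$ is commutative, and a commutative left SF ring is von Neumann regular [standard: flatness of $R/M$ for every maximal ideal $M$ forces each localization $R_M$ to be a field], which is exactly the conclusion ``commutative regular''.

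To establish $J(R)=0$, fix $b\in J(R)$. Since $b$ lies in every maximal left ideal $L$ of $R$ and $R$ is left SF, \cite[Lemma 3.14]{rnsf} gives $b=bx$ for some $x\in L$; then $1-x\in r(b)$ while $1-x\notin L$ (otherwise $1\in L$), so $r(b)\not\subseteq L$. As $L$ was arbitrary, the left ideal $R\,r(b)$ is contained in no maximal left ideal, whence $R\,r(b)=R$. Write $1=\sum_i t_i s_i$ with $t_i\in R$ and $s_i\in r(b)$, that is, $bs_i=0$ for all $i$.

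The key move is to exploit the commutativity of $R/J(R)$. For each $i$ one has $t_is_i-s_it_i\in J(R)$, so $\sum_i s_it_i=1-j$ for some $j\in J(R)$; in particular $w:=\sum_i s_it_i$ is a unit. But then $bw=\sum_i(bs_i)t_i=0$, and since $w\in U(R)$ this forces $b=0$. Therefore $J(R)=0$, and the opening paragraph finishes the argument.

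I expect the passage $J(R)=0$ to be the main obstacle. The naive route splits via the dichotomy $J(R)=N(R)$ or $J(R)\subseteq Z(R)$ of \cite[Lemma 2.6]{sp}: in the case $J(R)\subseteq Z(R)$ one can run the annihilator argument and use centrality to upgrade $bs_i=0$ to $s_ib=0$, but the nil case $J(R)=N(R)$ resists this, since NJ-symmetry only converts $bs_i=0$ into $s_ib\in J(R)$ rather than $s_ib=0$. The observation that replacing $\sum_i t_is_i$ by $\sum_i s_it_i$—legitimate modulo $J(R)$ by commutativity—produces a \emph{unit} annihilated by $b$ sidesteps the dichotomy entirely and disposes of all of $J(R)$ at once; recognizing this commutator trick is the crux of the proof.
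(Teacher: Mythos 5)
Your proof is correct, but it reaches $J(R)=0$ by a genuinely different route than the paper. The paper's proof is a chain of citations: Theorem \ref{splsf} gives $R/J(R)$ reduced, \cite[Remark 3.13]{rnsf} upgrades this to $R/J(R)$ strongly regular, whence $R$ is left quasi-duo, and then \cite[Theorem 4.10]{rnsf} (left quasi-duo left SF implies strongly regular) yields $R$ strongly regular, so $J(R)=0$; commutativity then comes from Corollary \ref{c1} exactly as in your opening step. You instead kill $J(R)$ by hand: for $b\in J(R)$ the flatness test \cite[Lemma 3.14]{rnsf} shows $r(b)$ lies in no maximal left ideal, so $1=\sum t_is_i$ with $bs_i=0$, and the commutativity of $R/J(R)$ (Corollary \ref{c1}) lets you flip this to a unit $\sum s_it_i$ annihilated by $b$ on the left, forcing $b=0$; the closing step that a commutative left SF ring is regular (via $R_M$ being a field for each maximal $M$) is also sound, and could equally be replaced by the paper's appeal to \cite[Remark 3.13]{rnsf} since $R\cong R/J(R)$ is reduced. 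Your argument is more self-contained — it avoids both Remark 3.13 and the nontrivial Theorem 4.10 of \cite{rnsf} in establishing $J(R)=0$ — at the cost of leaning more heavily on the commutativity of $R/J(R)$, whereas the paper's route would survive with only the weaker input that $R/J(R)$ is reduced.
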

\begin{proof}
	By Theorem \ref{splsf}, $R/J(R)$ is reduced. Hence $R/J(R)$ is strongly regular by \cite[Remark 3.13]{rnsf}. This implies that $R$ is left quasi-duo, and hence by \cite[Theorem 4.10]{rnsf}, $R$ is strongly regular. Hence, $J(R)=0$. By Corollary \ref{c1}, $R$ is commutative.
\end{proof}

\begin{prop}\label{eRe}
	R is NJ-symmetric if and only if $eRe$ is NJ-symmetric for all $e\in E(R)$.
\end{prop}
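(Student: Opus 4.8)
The plan is to handle the two implications separately. The ``if'' direction is immediate: since $1 \in E(R)$ and $1\cdot R\cdot 1 = R$, the assumption that $eRe$ is NJ-symmetric for every $e \in E(R)$ specializes at $e=1$ to the statement that $R$ is NJ-symmetric, using $N(1R1)=N(R)$ and $J(1R1)=J(R)$. No further work is needed here.

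For the ``only if'' direction, I would first record two standard facts about a corner ring $eRe$, which is itself a ring with identity $e$. First, for $x \in eRe$ we have $x \in N(eRe)$ if and only if $x \in N(R)$, because an element of $eRe$ has the same powers computed in $eRe$ as in $R$; thus $N(eRe) = eRe \cap N(R)$. Second, the Jacobson radical of the corner satisfies $J(eRe) = eJ(R)e = eRe \cap J(R)$: the inclusion $eJ(R)e \subseteq eRe \cap J(R)$ holds since $J(R)$ is an ideal, while conversely any $y \in eRe \cap J(R)$ satisfies $y = eye \in eJ(R)e$.

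With these in hand the verification is direct. Fix $e \in E(R)$ and take $a,b,c \in eRe$ with $abc \in N(eRe)$. By the first fact $abc \in N(R)$, so NJ-symmetry of $R$ gives $bac \in J(R)$; since $a,b,c \in eRe$ we also have $bac \in eRe$, whence $bac \in eRe \cap J(R) = J(eRe)$ by the second fact. Therefore $eRe$ is NJ-symmetric.

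The only delicate point is the corner-ring identity $J(eRe) = eRe \cap J(R)$, which is a classical result; once it is invoked, the argument simply transports the NJ-symmetric condition along the inclusion $eRe \hookrightarrow R$, using that both $N$ and $J$ restrict compatibly to the corner. I do not anticipate any genuine obstacle, since the substance of the proposition lies entirely in recalling that nilpotence is detected inside the corner and that the radical of a corner is the corner of the radical.
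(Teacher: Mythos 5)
Your proposal is correct and follows essentially the same route as the paper: the forward direction transports the condition into the corner via $N(eRe)\subseteq N(R)$ and the classical identity $J(eRe)=eJ(R)e=eRe\cap J(R)$, and the converse is the specialization $e=1$. The only difference is that you spell out the two corner-ring facts explicitly, which the paper invokes without comment.
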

\begin{proof}
	Suppose $R$ is NJ-symmetric. Let $eae,~ebe,~ece\in eRe$ with \\ $(eae)(ebe)ece\in N({eRe})$. Since $R$ is NJ-symmetric, $(ebe)(eae)(ece)\in J(R)$. Since $eJ(R)e=J(eRe)$, $(ebe)(eae)(ece)\in J({eRe})$. Hence, $eRe$ is NJ-symmetric. Whereas the converse is trivial.
\end{proof}

However, if every corner ring $eRe$ of $R$ is NJ-symmetric for all nonidentity idempotents $e$ of $R$, $R$ need not be NJ-symmetric as illustrated in the following example. 
\begin{example}

    Take $R=M_2(\mathbb{Z}_2)$. Then, $E(R)=\{0,I,E_{11}, E_{22}, E_{11}+E_{12}, \\ E_{11}+E_{21}, E_{22}+E_{12}, E_{22}+E_{21} \}$, where $I$ is the identity matrix of $R$. Observe that $E_{ii}RE_{ii}\cong \mathbb{Z}_2$, $(E_{ii}+E_{kl})R(E_{ii}+E_{kl})\cong \left\{ (E_{ii}+E_{kl})a:a\in \mathbb{Z}_2\right\}$, where $i,k,l\in \{1,2\}$ and $k\neq l.$ Clearly, if $e \in E(R)\setminus \{0,I\} $, then $eRe$ is domain, and hence NJ-symmetric. Clearly, $R$ is not NJ-symmetric.

\end{example}

In Example \ref{mat}, we obtain that for any ring $R$, $M_n(R)$ is not NJ-symmetric for all integers $n\geq 2$. So, we study the matrix extensions over  an NJ-symmetric ring in the following.

A \textit{Morita context} (\cite{morita}) is a $4$-tuple $\begin{pmatrix}
	R_1 & M\\
	P & R_2
\end{pmatrix}$, where $R_1$, $R_2$ are rings, $M$ is $(R_1,R_2)$-bimodule and $P$ is $(R_2,R_1)$-bimodule, and there exist context products $M\times P\rightarrow R_1$ and $P\times M\rightarrow R_2$ written multiplicatively as $(m,p)\mapsto mp$ and $(p,m)\mapsto pm$. Clearly, $\begin{pmatrix}
	R_1 & M\\
	P & R_2
\end{pmatrix}$ is an associative ring with the usual matrix operations.\\
A Morita context $\begin{pmatrix}
	R_1 & M\\
	P & R_2
\end{pmatrix}$ is said to be trivial if the context products are trivial, that is, $MP=0$ and $PM=0$.

\begin{prop}
	Suppose $R=\begin{pmatrix}
		R_1 & M\\
		P & R_2
	\end{pmatrix}$  is a trivial Morita context. Then, $R$ is NJ-symmetric if and only if $R_1$ and $R_2$ are NJ-symmetric.
\end{prop}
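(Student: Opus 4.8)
The plan is to first determine the Jacobson radical and the nilpotent elements of $R$ explicitly, and then split the equivalence into two halves: the forward implication will follow essentially for free from the corner-ring criterion (Proposition \ref{eRe}), while the reverse implication will be a direct computation on the diagonal entries.

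The two structural identities I would establish are the following. Since the context products are trivial, $MP=0$ and $PM=0$, so the ideal $I=\begin{pmatrix} 0 & M\\ P & 0\end{pmatrix}$ satisfies $I^2=0$; being a nilpotent ideal it sits inside $J(R)$, and because $R/I\cong R_1\times R_2$ one gets $J(R)=\begin{pmatrix} J(R_1) & M\\ P & J(R_2)\end{pmatrix}$. For nilpotency, a short induction using $MP=PM=0$ shows that for $x=\begin{pmatrix} a & m\\ p & b\end{pmatrix}$ the $n$-th power has diagonal $(a^n,b^n)$ (the off-diagonal cross terms all die), whence $x\in N(R)$ if and only if $a\in N(R_1)$ and $b\in N(R_2)$. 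These two descriptions are the engine of the whole argument, and proving them is the only genuinely non-routine step.

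For the forward direction, set $e=\begin{pmatrix} 1 & 0\\ 0 & 0\end{pmatrix}\in E(R)$. A direct check gives $eRe\cong R_1$ and $(1-e)R(1-e)\cong R_2$ as rings (the would-be cross terms vanish by triviality). Hence if $R$ is NJ-symmetric, Proposition \ref{eRe} immediately yields that $R_1$ and $R_2$ are NJ-symmetric. For the reverse direction, assume $R_1$ and $R_2$ are NJ-symmetric, take $X,Y,Z\in R$ with diagonal entries $a_i\in R_1$, $b_i\in R_2$, and suppose $XYZ\in N(R)$. Using $MP=0$, the diagonal of $XYZ$ is $(a_1a_2a_3,\,b_1b_2b_3)$, so by the nilpotent description $a_1a_2a_3\in N(R_1)$ and $b_1b_2b_3\in N(R_2)$. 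NJ-symmetry of $R_1$ and $R_2$ then gives $a_2a_1a_3\in J(R_1)$ and $b_2b_1b_3\in J(R_2)$. Since the diagonal of $YXZ$ is exactly $(a_2a_1a_3,\,b_2b_1b_3)$ while every off-diagonal entry lies in $M$ (respectively $P$), the radical description forces $YXZ\in J(R)$, so $R$ is NJ-symmetric.

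I expect the main obstacle to be verifying the two structural identities carefully — in particular the nilpotent-power formula, where one must check that every mixed term $a^imb^j$ and $b^ipa^j$ behaves correctly and that the off-diagonal contributions never feed back into the diagonal (this is precisely where $MP=PM=0$ is used). Once those are in place, both implications are mechanical: the forward one reduces to Proposition \ref{eRe}, and the reverse one is just the observation that the relevant diagonal entries of $XYZ$ and $YXZ$ are the ``right'' triple products, so that NJ-symmetry of the corners transfers entrywise, with the off-diagonal part automatically absorbed into $J(R)$.
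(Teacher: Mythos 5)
Your proposal is correct and follows essentially the same route as the paper: the forward implication via the corner-ring criterion (Proposition \ref{eRe}) with $e=\begin{pmatrix}1&0\\0&0\end{pmatrix}$, and the reverse implication by reading off the diagonal entries of $XYZ$ and $YXZ$. The only difference is that you explicitly verify the descriptions of $N(R)$ and $J(R)$ for a trivial Morita context, which the paper uses implicitly; these verifications are correct.
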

\begin{proof}
	Suppose $R$ is NJ-symmetric. By Proposition \ref{eRe}, $eRe$ is NJ-symmetric. So, $R_1$ and $R_2$ are NJ-symmetric. Conversely, assume that $R_1$ and $R_2$ are NJ-symmetric and $x=\begin{pmatrix}
		a_0 & m_0\\
		p_0 & a_{0_0}
	\end{pmatrix},~ y =\begin{pmatrix}
		b_1 & m_1\\
		p_1 & b_{1_1}
	\end{pmatrix},~ z =\begin{pmatrix}
		c_2 & m_2\\
		p_2 & c_{2_2}
	\end{pmatrix}\in R$ be such that $xyz\in N(R)$. Then, $a_0b_1c_2\in N({R_1})$ and $a_{0_0}b_{1_1}c_{2_2}\in N({R_2})$. Since $R_1$ and $R_2$ are NJ-symmetric rings, $b_1a_0c_2\in J({R_1})$ and $b_{1_1}a_{0_0}c_{2_2}\in J({R_2})$. So, $yxz \in J(R)$, that is, $R$ is NJ-symmetric. 
\end{proof}

Let $R_1$ and $R_2$ be any rings, $M$ a $(R_1,R_2)$-bimodule and $R=\begin{pmatrix}
	R_1 & M\\
	0 & R_2
\end{pmatrix}$, the formal triangular matrix ring. It is well-known that\\ $J(R)=\begin{pmatrix}
	J(R_1) & M\\
	0 & J(R_2)
\end{pmatrix}$.

\begin{cor}
	Let $R_1$ and $R_2$ be any rings and  $M$ a $(R_1,R_2)$- bimodule. Then, $\begin{pmatrix}
		R_1 & M\\
		0 & R_2
	\end{pmatrix}$ is NJ-symmetric if and only if $R_1$ and $R_2$ are NJ-symmetric.
\end{cor}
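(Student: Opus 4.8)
The plan is to reduce this corollary to the trivial Morita context proposition that immediately precedes it. The upper triangular matrix ring $\begin{pmatrix} R_1 & M\\ 0 & R_2\end{pmatrix}$ is precisely a Morita context $\begin{pmatrix} R_1 & M\\ P & R_2\end{pmatrix}$ with $P=0$; in that case both context products $M\times P\to R_1$ and $P\times M\to R_2$ are forced to be zero, so the context is trivial. Hence the preceding proposition applies verbatim and yields the claim at once. If I wanted a self-contained argument that does not invoke the proposition, I would instead argue directly using the stated formula $J(R)=\begin{pmatrix} J(R_1) & M\\ 0 & J(R_2)\end{pmatrix}$, and I now sketch that route as well.

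For the direct approach, the forward direction is the easy one: if $R$ is NJ-symmetric then each corner ring is NJ-symmetric by Proposition \ref{eRe} (applied to the idempotents $\begin{pmatrix}1&0\\0&0\end{pmatrix}$ and $\begin{pmatrix}0&0\\0&1\end{pmatrix}$), and those corners are exactly $R_1$ and $R_2$. For the converse, I would take $x,y,z\in R$ with $xyz\in N(R)$, write $x=\begin{pmatrix}a_0&m_0\\0&a_0'\end{pmatrix}$ and similarly for $y,z$, and read off the diagonal. Because the lower-left entry is always $0$, the product $xyz$ is upper triangular with diagonal $\bigl(a_0b_1c_2,\ a_0'b_1'c_2'\bigr)$, and $xyz\in N(R)$ forces $a_0b_1c_2\in N(R_1)$ and $a_0'b_1'c_2'\in N(R_2)$. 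The NJ-symmetry of $R_1$ and $R_2$ then gives $b_1a_0c_2\in J(R_1)$ and $b_1'a_0'c_2'\in J(R_2)$, so $yxz$ has its diagonal entries inside the respective Jacobson radicals.

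The one genuine point to verify is that these diagonal conditions suffice to place $yxz$ in $J(R)$, and this is exactly where the quoted description of $J(R)$ does the work. Since every element with diagonal in $(J(R_1),J(R_2))$ and arbitrary $(1,2)$-entry in $M$ lies in $\begin{pmatrix} J(R_1) & M\\ 0 & J(R_2)\end{pmatrix}=J(R)$, the off-diagonal entry of $yxz$ is irrelevant, and we conclude $yxz\in J(R)$. Thus $R$ is NJ-symmetric.

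I expect the only conceptual obstacle to be nilpotency of an upper triangular matrix: one must confirm that $xyz\in N(R)$ genuinely forces both diagonal entries to be nilpotent (not merely the whole matrix to be nilpotent), which follows since the diagonal map is a ring homomorphism onto $R_1\times R_2$ and carries nilpotents to nilpotents. Everything else is bookkeeping, so I would keep the proof short by simply invoking the trivial-Morita-context proposition, noting that the upper triangular ring is the special case $P=0$.
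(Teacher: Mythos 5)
Your proposal is correct and matches the paper's intent: the corollary is stated immediately after the trivial Morita context proposition precisely because the formal triangular matrix ring is the special case $P=0$, which forces both context products to vanish, so the proposition applies directly. Your supplementary direct argument via the diagonal projection and the description of $J(R)$ is also sound, but it is not needed beyond the one-line reduction.
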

\begin{cor}\label{upt}
	R is NJ-symmetric if and only if $T_n(R)$ is NJ-symmetric.
\end{cor}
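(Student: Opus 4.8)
The plan is to prove Corollary~\ref{upt} by induction on $n$, using the structural identification $T_n(R) \cong \begin{pmatrix} R & M \\ 0 & T_{n-1}(R) \end{pmatrix}$ as a formal triangular matrix ring, where $M = R^{n-1}$ is the natural $(R, T_{n-1}(R))$-bimodule realized by the top row (excluding the corner entry). This reduces the claim to the immediately preceding Corollary, which characterizes when a formal triangular matrix ring $\begin{pmatrix} R_1 & M \\ 0 & R_2 \end{pmatrix}$ is NJ-symmetric.

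First I would establish the base case $n=1$, which is trivial since $T_1(R) = R$. For the inductive step, I would assume $T_{n-1}(R)$ is NJ-symmetric if and only if $R$ is NJ-symmetric. Then I would observe that $T_n(R)$ is a formal triangular matrix ring with corner rings $R$ and $T_{n-1}(R)$: concretely, writing a matrix in $T_n(R)$ as $\begin{pmatrix} a & v \\ 0 & B \end{pmatrix}$ with $a \in R$, $v \in R^{n-1}$, and $B \in T_{n-1}(R)$, the ring operations decompose exactly as in the triangular construction. Applying the preceding Corollary, $T_n(R)$ is NJ-symmetric if and only if both $R$ and $T_{n-1}(R)$ are NJ-symmetric. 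By the inductive hypothesis, $T_{n-1}(R)$ is NJ-symmetric precisely when $R$ is, so both conditions collapse to the single condition that $R$ is NJ-symmetric, completing the induction.

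The main thing to verify carefully—and the only step that is not purely formal—is the identification of $T_n(R)$ as the formal triangular matrix ring in the preceding Corollary, i.e. checking that the bimodule structure on $M = R^{n-1}$ and the multiplication are compatible so that the Jacobson radical decomposes as stated ($J(T_n(R)) = \begin{pmatrix} J(R) & M \\ 0 & J(T_{n-1}(R)) \end{pmatrix}$). Since this radical formula is the quoted well-known fact about triangular matrix rings and the preceding Corollary already packages the NJ-symmetric characterization in these terms, I do not expect a genuine obstacle here; the work is bookkeeping. I would therefore keep the proof to a single line invoking the preceding corollary together with induction, which is presumably the intended argument given the placement of the statements.
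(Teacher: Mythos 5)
Your proposal is correct and matches the paper's intended argument: the paper states Corollary~\ref{upt} immediately after the formal triangular matrix corollary with no written proof, and the evident derivation is exactly your induction on $n$ via the identification $T_n(R) \cong \begin{pmatrix} R & R^{n-1} \\ 0 & T_{n-1}(R) \end{pmatrix}$. The bookkeeping you flag (the bimodule structure and the radical decomposition) is indeed routine, so nothing further is needed.
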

\begin{lemma}\label{nI}
	Let I be a nil ideal of R. If R/I is an NJ-symmetric ring, then so is R.
\end{lemma}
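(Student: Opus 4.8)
The plan is to run essentially the same quotient argument used in Theorem \ref{rqj}, the only additional input being the elementary fact that a nil ideal lies inside the Jacobson radical. First I would record the two standard facts on which everything rests: since $I$ is nil, every element of $I$ is nilpotent, so for each $x\in I$ and $r\in R$ the element $1-rx$ is a unit; hence $I\subseteq J(R)$. Consequently the correspondence theorem gives $J(R/I)=J(R)/I$, and in particular an element $y\in R$ satisfies $\bar y\in J(R/I)$ if and only if $y\in J(R)$.

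Next I would take arbitrary $a,b,c\in R$ with $abc\in N(R)$, say $(abc)^n=0$ for some positive integer $n$. Passing to $R/I$ and using that the quotient map is a ring homomorphism, $(\bar a\bar b\bar c)^n=\overline{(abc)^n}=\bar 0$, so $\bar a\bar b\bar c\in N(R/I)$. Since $R/I$ is NJ-symmetric by hypothesis, this forces $\bar b\bar a\bar c\in J(R/I)$.

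Finally I would translate this back: by the identification $J(R/I)=J(R)/I$ recorded above, the membership $\overline{bac}\in J(R/I)$ means precisely that $bac\in J(R)$. As $a,b,c$ were arbitrary elements with $abc\in N(R)$, this shows that $R$ is NJ-symmetric.

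I do not anticipate a genuine obstacle here; the argument is a routine lift through the quotient, and is in fact a mild strengthening of Theorem \ref{rqj} (which is the special case $I=J(R)$). The only point requiring any care is the justification of the two radical facts, namely that nilpotency of the elements of $I$ forces $I\subseteq J(R)$ and that $J(R/I)=J(R)/I$ when $I\subseteq J(R)$; both are standard and may be cited rather than reproved. Everything else is formal manipulation through the canonical surjection $R\to R/I$.
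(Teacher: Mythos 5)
Your proof is correct and follows essentially the same route as the paper's: pass to $R/I$, apply NJ-symmetry there, and pull the conclusion back. The only cosmetic difference is that you cite the standard facts $I\subseteq J(R)$ and $J(R/I)=J(R)/I$, whereas the paper verifies directly that $1-bacx$ is a unit in $R$ by lifting an inverse from $R/I$ through the nil ideal.
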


\begin{proof}
	Let $a,b,c\in R$ be such that $abc\in N(R)$. Since $R/I$ is NJ-symmetric ring, $\bar{b}\bar{a}\bar{c}\in J(R/I)$. This implies that $\bar{1}-\bar{b}\bar{a}\bar{c}\bar{x}\in U(R/I)$ for any $x\in R$. Therefore, $1-(1-bacx)y\in I$ for some $y\in R$. Since $I$ is nil, $(1-bacx)$ is right invertible. Similarly, it can be shown that $1-bacx$ is left invertible. So, $1-bacx\in U(R)$. Therefore, $bac\in J(R)$.
\end{proof}

\begin{prop}\label{Rn}
	The following are equivalent:
	\begin{enumerate}
		\item R is NJ-symmetric.
		\item $R_n=
		\left \lbrace
		\left(\begin{array}{lccccr}
			a & a_{12} & \dots  & a_{1(n-1)} & a_{1n}\\
			0 & a & \dots & a_{2(n-1)} & a_{2n}\\
			\vdots & \vdots &\ddots & \vdots & \vdots \\
			0 & 0 & \dots & a &a_{(n-1)n}\\
			0 & 0 & \dots & 0 & a \\
		\end{array}
		\right ):a, a_{ij}\in R,~ i<j\right \rbrace$ is NJ-symmetric.
	\end{enumerate}
\end{prop}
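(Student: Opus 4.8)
The plan is to analyze $R_n$ through the ideal $I$ of strictly upper triangular matrices (those whose common diagonal entry is $0$) together with the embedding of $R$ into $R_n$ as scalar matrices. First I would record two routine structural facts. (i) $I$ is a two-sided ideal of $R_n$: multiplying a strictly upper triangular matrix by any element of $R_n$ on either side again produces a strictly upper triangular matrix, and moreover $I^n = 0$, so $I$ is nil. (ii) The map $\pi : R_n \to R$ sending a matrix to its common diagonal entry is a surjective ring homomorphism with kernel exactly $I$, whence $R_n/I \cong R$.

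For the implication $(1)\Rightarrow(2)$, I would simply feed facts (i) and (ii) into Lemma \ref{nI}. Since $R$ is NJ-symmetric and $R_n/I \cong R$, the quotient $R_n/I$ is NJ-symmetric; as $I$ is a nil ideal, Lemma \ref{nI} immediately yields that $R_n$ is NJ-symmetric. No further computation is needed in this direction.

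For $(2)\Rightarrow(1)$, I would use the scalar embedding $\lambda : R \to R_n$, where $\lambda(a)$ is the matrix with $a$ along the diagonal and zeros elsewhere. Let $a,b,c \in R$ with $abc \in N(R)$, say $(abc)^k = 0$. Because $\lambda$ is a (unital, injective) ring homomorphism, $\lambda(a)\lambda(b)\lambda(c) = \lambda(abc)$ and $\lambda(abc)^k = \lambda\big((abc)^k\big) = 0$, so $\lambda(a)\lambda(b)\lambda(c) \in N(R_n)$. Applying the hypothesis that $R_n$ is NJ-symmetric gives $\lambda(bac) = \lambda(b)\lambda(a)\lambda(c) \in J(R_n)$. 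Finally, since the surjection $\pi$ carries $J(R_n)$ into $J(R_n/I) = J(R)$, I obtain $bac = \pi(\lambda(bac)) \in J(R)$, so $R$ is NJ-symmetric.

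The argument is essentially bookkeeping once the structural facts are in place, so I do not anticipate a genuine obstacle. The only points requiring care are confirming that $I$ is nil with $R_n/I \cong R$ (used for $(1)\Rightarrow(2)$) and invoking the standard fact that a surjective ring homomorphism maps the Jacobson radical into the Jacobson radical (used for $(2)\Rightarrow(1)$); both are routine.
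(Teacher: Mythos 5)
Your proof is correct. For $(1)\Rightarrow(2)$ it is exactly the paper's argument: the strictly upper triangular matrices form a nil (indeed nilpotent, $I^n=0$) ideal $I$ of $R_n$ with $R_n/I\cong R$, and Lemma \ref{nI} finishes it. For $(2)\Rightarrow(1)$ you diverge slightly: the paper simply writes ``it follows from Proposition \ref{eRe}'' (the corner-ring proposition), whereas you give a direct argument using the scalar embedding $\lambda$ together with the retraction $\pi$ and the standard fact that a surjective ring homomorphism maps the Jacobson radical into the Jacobson radical. Your version is, if anything, the more watertight of the two: the diagonal copy of $R$ inside $R_n$ is not of the form $eR_ne$ for an idempotent $e$ of $R_n$ (the only way to get all of $R$ as a corner is $e=1$, which returns $R_n$ itself), so the paper's citation of Proposition \ref{eRe} requires some charitable reading, while your computation $bac=\pi(\lambda(b)\lambda(a)\lambda(c))\in\pi(J(R_n))\subseteq J(R)$ is explicit and self-contained. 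Both routes cost essentially nothing; yours just makes the trivial direction verifiably trivial.
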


\begin{proof}
	$(1)\implies (2)$. Let $I=\left \lbrace
	\left(\begin{array}{lccccr}
		0 & a_{12} & \dots & a_{1n}\\
		0 & 0 & \dots & a_{2n}\\
		\vdots & \vdots &\ddots & \vdots\\
		0 & 0 & \dots & 0 \\
	\end{array}
	\right ): a_{ij}\in R,~i<j\right \rbrace\subseteq R_n$. Note that $I$ is a nil ideal of $R_n$ and $R_n/I\cong R$. By Lemma \ref{nI}, $R_n$ is NJ-symmetric.
	
	$(2)\implies (1)$. It follows from Proposition \ref{eRe}.  
\end{proof}

Let $A$ be a $(R,R)$-bimodule which is a general ring (not necessarily with unity) in which $(aw)r=a(wr)$, $(ar)w=a(rw)$ and $(ra)w=r(aw)$ hold for all $a,w\in A$ and $r\in R$. Then, \textit{ideal-extension} (also called \textit{Dorroh extension}) $I(R;A)$ of $R$ by $A$ is defined to be the additive abelian group $I(R;A)=R\oplus A$ with multiplication $(r,a)(s,w)=(rs,rw+as+aw)$.

\begin{prop}\label{dor}
	Let $A$ be an $(R,R)$-bimodule which is a general ring (not necessarily with unity) in which $(aw)r=a(wr)$, $(ar)w=a(rw)$ and $(ra)w=r(aw)$ hold for all $a,w\in A$ and $r\in R$. Suppose that for any $a\in A$ there exists $w\in A$ such that $a+w+aw=0$. Then, the following are equivalent:
	\begin{enumerate}
		\item R is NJ-symmetric.
		\item Dorroh extension $S=I(R;A)$ is NJ-symmetric.
	\end{enumerate}
\end{prop}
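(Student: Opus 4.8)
The plan is to prove the equivalence by relating the Jacobson radical and the nilpotents of the Dorroh extension $S = I(R;A)$ to those of $R$, then transporting the NJ-symmetric condition across the natural projection $\pi \colon S \to R$, $(r,a) \mapsto r$, which is a surjective ring homomorphism. The crucial structural input is the hypothesis that for every $a \in A$ there exists $w \in A$ with $a + w + aw = 0$: this says precisely that every element $(0,a)$ of the ideal $0 \oplus A$ is quasi-regular in $S$, because $(0,a)(0,w) = (0, aw)$ and $(0,a)+(0,w)+(0,a)(0,w) = (0, a+w+aw) = (0,0)$. Hence $0 \oplus A \subseteq J(S)$. Combined with $S/(0\oplus A) \cong R$, this gives a clean comparison: $J(S) = \{(r,a) : r \in J(R),\ a \in A\}$, since $J(S)/(0\oplus A) = J(R)$.

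For the direction $(2) \Rightarrow (1)$, the idea is that $R$ is a homomorphic image of $S$ under $\pi$, but since homomorphic images need not preserve NJ-symmetry in general (as Example \ref{frac} shows), I would instead realize $R$ as a corner or use the splitting $r \mapsto (r,0)$. The map $\iota \colon R \to S$, $r \mapsto (r,0)$, is a ring homomorphism whose image $R \oplus 0$ is a subring isomorphic to $R$; I would check that $R \oplus 0 = eSe$ is not available directly, so the better route is to observe that NJ-symmetry passes to the quotient here precisely because $0 \oplus A \subseteq J(S)$ is nil-like under quasi-regularity. Concretely, if $abc \in N(R)$ in $R$, lift to $(a,0),(b,0),(c,0) \in S$; then $(abc,0) \in N(S)$, so NJ-symmetry of $S$ gives $(bac,0) \in J(S)$, and applying $\pi$ yields $bac \in J(R)$.

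For the main direction $(1) \Rightarrow (2)$, I would take $x = (r_1,a_1),\ y=(r_2,a_2),\ z=(r_3,a_3) \in S$ with $xyz \in N(S)$. The first step is to show $\pi(xyz) = r_1 r_2 r_3 \in N(R)$, which is immediate since $\pi$ is a ring homomorphism and maps nilpotents to nilpotents. By NJ-symmetry of $R$, we get $r_2 r_1 r_3 \in J(R)$. The remaining step is to conclude $yxz \in J(S)$; writing $yxz = (r_2 r_1 r_3,\, b)$ for some $b \in A$, I would use the description $J(S) = \{(r,a) : r \in J(R)\}$ established above, so that $r_2 r_1 r_3 \in J(R)$ forces $yxz \in J(S)$ regardless of the $A$-component $b$.

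I expect the main obstacle to be the rigorous verification that $J(S) = \{(r,a) : r \in J(R),\ a \in A\}$, and in particular the containment $0 \oplus A \subseteq J(S)$, which is where the quasi-regularity hypothesis $a + w + aw = 0$ must be deployed carefully; one must check that $0 \oplus A$ is a \emph{two-sided} ideal of $S$ whose every element is left (equivalently two-sided) quasi-regular, so that it lies in $J(S)$, and then that the induced radical of the quotient $S/(0\oplus A) \cong R$ pulls back correctly to give equality rather than mere containment. Once this radical computation is in place, both implications reduce to routine applications of the projection $\pi$ and the inclusion $\iota$, together with the fact that the $A$-coordinate of any element of $J(S)$ is unconstrained.
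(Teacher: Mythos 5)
Your proposal is correct and follows essentially the same route as the paper: both arguments hinge on showing $0\oplus A\subseteq J(S)$ from the quasi-regularity hypothesis $a+w+aw=0$ and then reducing the NJ-symmetric condition to the first coordinate via the projection and the inclusion $r\mapsto(r,0)$. The only difference is cosmetic: where you package the reduction as $J(S)=\{(r,a):r\in J(R),\ a\in A\}$ (from $J(S)/(0\oplus A)=J(S/(0\oplus A))\cong J(R)$), the paper instead verifies directly by an explicit unit computation that $(yxz,0)\in J(S)$.
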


\begin{proof}
	$(1)\implies (2)$. Let $\alpha=(x,a),~\beta=(y,b),~\gamma=(z,c) \in S$ satisfying  $\alpha\beta\gamma \in N(S)$. Note that $xyz\in N(R)$. This implies that $yxz\in J(R)$ as  $R$ is NJ-symmetric. Firstly, we claim that $0\oplus A\subseteq J(S)$. Let $(0,a)\in 0\oplus A$. For any $(x_1,a_1)\in S$, we have, $(1,0)-(0,a)(x_1,a_1)=(1,-ax_1-aa_1)$. By hypothesis,  $(1,-ax_1-aa_1)(1,a_2)=(1,0)$ for some $a_2\in A$. Hence, $0\oplus A\subseteq J(S)$. Observe that $\beta\alpha\gamma=(yxz,w)$ for some $w\in A$. So, to prove $\beta\alpha\gamma \in J(S)$, it suffices to show $(yxz,0)\in J(S)$. For any $(m,a_3)\in S$, we have, $(1,0)-(yxz,0)(m,a_3)=(1-yxzm,-yxza_3)\in U(S)$, as $(1-yxzm,-yxza_3)=(1-yxzm,0)(1,(1-yxzm)^{-1}(-yxza_3))$, and $1-yxzm\in U(R)$, $(1,(1-yxzm)^{-1}(-yxza_3))=(1,0)+(0,(1-yxzm)^{-1}(-yxza_3))\in U(S)$. Thus, $(yxz,0)\in J(S)$. Therefore, $\beta\alpha\gamma\in J(S)$, that is, $S$ is NJ-symmetric.
	
	$(2)\implies (1)$. Let $a,b, c\in R$ satisfying $abc\in N(R)$. Note that $(a,0)(b,0)(c,0)\in N(S)$. As $S$ is NJ-symmetric ring, $(b,0)(a,0)(c,0)\in J(S)$. Hence, $bac\in J(R)$, that is, $R$ is an NJ-symmetric ring.
\end{proof}

Let $\Psi: R\rightarrow R$ be a ring homomorphism , $R[[x,\Psi]]$ represents the ring of skew formal power series over $R$, that is,  all formal power series in $x$ with coefficients from $R$ and multiplication is defined with respect to the rule $xr=\Psi(r)x$ for all $r\in R$. It is well-known that $J(R[[x,\Psi]])=J(R)+<x>$, $<x>$ is the ideal of $R[[x,\Psi]]$ generated by $x$. Since $R[[x,\Psi]]\cong I(R;<x>)$.
\begin{cor}
	Let $\Psi:R\rightarrow R$ be a ring homomorphism. Then, the following statements are equivalent:
	\begin{enumerate}
		\item R is NJ-symmetric.
		\item $R[[x,\Psi]]$ is NJ-symmetric.
	\end{enumerate}
\end{cor}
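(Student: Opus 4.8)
The plan is to reduce this corollary to Proposition \ref{dor} via the stated isomorphism $R[[x,\Psi]]\cong I(R;\langle x\rangle)$. First I would set $A=\langle x\rangle$, the ideal of $R[[x,\Psi]]$ consisting of all skew power series with zero constant term, and observe that $A$ is naturally an $(R,R)$-bimodule which is itself a (non-unital) ring under the skew multiplication; the associativity-type compatibility conditions $(aw)r=a(wr)$, $(ar)w=a(rw)$, and $(ra)w=r(aw)$ required by Proposition \ref{dor} hold simply because they are instances of associativity in the ambient ring $R[[x,\Psi]]$, where $R$ sits as the subring of constant series.

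The crux of applying Proposition \ref{dor} is verifying its standing hypothesis: for every $a\in A$ there exists $w\in A$ with $a+w+aw=0$. I would check this by a direct computation in $I(R;\langle x\rangle)$, where the element $(0,a)$ must have $(1,a)$ invertible with inverse $(1,w)$; equivalently $1+a\in U(R[[x,\Psi]])$ for every $a\in\langle x\rangle$. This is exactly the well-known fact that a skew formal power series with invertible (here, $=1$) constant term is a unit, so $1+a$ is invertible and the required $w$ is its inverse minus $1$, which again lies in $\langle x\rangle$. This geometric-series-type inversion is the one genuinely computational point, and I expect it to be the main (though mild) obstacle, since one must confirm that the inverse has zero constant term.

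Once the hypotheses are in place, Proposition \ref{dor} yields directly that $R$ is NJ-symmetric if and only if $S=I(R;\langle x\rangle)$ is NJ-symmetric. Finally I would invoke the stated isomorphism $R[[x,\Psi]]\cong I(R;\langle x\rangle)$, together with the trivial observation that the NJ-symmetric property is preserved under ring isomorphism (it is defined purely in terms of the ring operations, $N(R)$, and $J(R)$, all of which are isomorphism-invariant). Combining these gives the equivalence of (1) and (2), completing the proof.

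\begin{proof}
	Set $A=\langle x\rangle$, the ideal of $R[[x,\Psi]]$ of all skew power series with zero constant term. Then $A$ is an $(R,R)$-bimodule and a general ring under the induced multiplication, and the identities $(aw)r=a(wr)$, $(ar)w=a(rw)$, $(ra)w=r(aw)$ hold as instances of associativity in $R[[x,\Psi]]$. For any $a\in A$, the element $1+a\in R[[x,\Psi]]$ has invertible constant term and is therefore a unit, so $(1+a)^{-1}=1+w$ for some $w\in A$; this $w$ satisfies $a+w+aw=0$. Hence the hypotheses of Proposition \ref{dor} are met, and $R$ is NJ-symmetric if and only if $I(R;\langle x\rangle)$ is NJ-symmetric. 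Since $R[[x,\Psi]]\cong I(R;\langle x\rangle)$ and the NJ-symmetric property is invariant under ring isomorphism, the two statements are equivalent.
\end{proof}
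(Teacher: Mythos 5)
Your proposal is correct and follows exactly the route the paper intends: the paper gives no explicit proof, merely noting the isomorphism $R[[x,\Psi]]\cong I(R;\langle x\rangle)$ before the corollary so that it reads off from Proposition \ref{dor}. You have simply filled in the verifications the paper leaves implicit (that $\langle x\rangle$ is the set of series with zero constant term, the bimodule compatibility, and the quasi-regularity condition $a+w+aw=0$ via invertibility of $1+a$), all of which are accurate.
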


\begin{cor}
	The following statements are equivalent:
	\begin{enumerate}
		\item R is NJ-symmetric.
		\item $R[[x]]$ is NJ-symmetric.
	\end{enumerate}
\end{cor}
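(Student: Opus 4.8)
The plan is to recognize the ordinary formal power series ring $R[[x]]$ as a special instance of the skew construction $R[[x,\Psi]]$ treated in the preceding corollary. Concretely, I would take $\Psi=\mathrm{id}_R$, the identity ring homomorphism on $R$. Under this choice the defining skew multiplication rule $xr=\Psi(r)x$ collapses to the commuting rule $xr=rx$, so that $R[[x,\mathrm{id}_R]]$ is literally the ring $R[[x]]$. Since $\mathrm{id}_R\colon R\to R$ is manifestly a ring homomorphism, the hypotheses of the preceding corollary are met, and invoking it with $\Psi=\mathrm{id}_R$ yields at once that $R$ is NJ-symmetric if and only if $R[[x]]$ is NJ-symmetric.

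I do not anticipate any genuine obstacle, since all of the substantive work has already been carried out in the more general skew setting; the only thing to observe is that the identity map is an admissible value of $\Psi$. The one point worth stating explicitly (to reassure the reader that no degeneracy occurs) is that the isomorphism $R[[x,\Psi]]\cong I(R;\langle x\rangle)$ recorded just before that corollary remains valid verbatim when $\Psi=\mathrm{id}_R$, because the identification uses only the bimodule structure of $\langle x\rangle$ and the Dorroh multiplication, neither of which depends on $\Psi$ being nontrivial.

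If a self-contained argument were preferred over appealing to the skew corollary, I would instead apply Proposition \ref{dor} directly to $A=\langle x\rangle$, the ideal of power series with zero constant term in $R[[x]]$. The hypotheses on $A$ as a general $(R,R)$-bimodule hold trivially, and the key quasi-regularity condition is satisfied: given $a\in\langle x\rangle$, the element $1+a$ is a unit in $R[[x]]$ with inverse $1+w$, where $w=-a+a^2-a^3+\cdots\in\langle x\rangle$, and expanding $(1+a)(1+w)=1$ gives exactly $a+w+aw=0$. Proposition \ref{dor} then delivers the desired equivalence. Either route closes the proof immediately, so the corollary is essentially a restatement of the previous one with $\Psi$ specialized to the identity.
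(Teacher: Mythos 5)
Your proposal is correct and matches the paper's (implicit) argument: the paper states this corollary without proof precisely because it is the specialization $\Psi=\mathrm{id}_R$ of the preceding corollary on $R[[x,\Psi]]$, which in turn rests on Proposition \ref{dor}. Your verification of the quasi-regularity condition $a+w+aw=0$ for $a\in\langle x\rangle$ via the geometric series is exactly the point that makes the Dorroh proposition applicable, so nothing further is needed.
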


It is a natural question to ask whether the polynomial ring over an NJ-symmetric ring is NJ-symmetric. In regard to this question, we have the following theorem.

\begin{thm}
	There exists an NJ-symmetric ring $R$ such that $R[x]$
	is not NJ-symmetric.
\end{thm}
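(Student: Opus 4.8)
The plan is to exhibit an explicit ring $R$ that is NJ-symmetric together with three polynomials $f,g,h\in R[x]$ witnessing the failure of the NJ-symmetric condition in $R[x]$, i.e. $fgh\in N(R[x])$ while $gfh\notin J(R[x])$. The base ring $R$ will be chosen so that its NJ-symmetry is cheap to certify: either by producing it as a semicommutative ring (these are NJ-symmetric by the remark following Theorem \ref{bac}), or by arranging that $R/J(R)$ is reduced and invoking Theorem \ref{rqj}. All of the real work is then pushed into the polynomial ring, where the defining relations of $R$ force $fgh$ to be nilpotent (in fact I would arrange $fgh=0$) while the rotated product $gfh$ survives.

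The essential tool for certifying $gfh\notin J(R[x])$ is the classical description of the Jacobson radical of a polynomial ring: for any ring $R$ one has $J(R[x])=N[x]$, where $N=J(R[x])\cap R$ is a nil ideal of $R$ (Amitsur). Consequently every coefficient of every element of $J(R[x])$ is nilpotent, so it suffices to show that $gfh$ has a coefficient that is \emph{not} nilpotent; this immediately gives $gfh\notin J(R[x])$ and hence the desired violation. This reduction is the heart of the argument and is what makes the statement provable without analysing $J(R[x])$ in detail.

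A structural warning guides the construction. The familiar examples of semicommutative rings whose polynomial rings fail to be semicommutative are built to be \emph{nil} (all sufficiently long monomials in the generators vanish); there $fg=0$ but the reversed product $gf$, although nonzero, has only nilpotent coefficients, so $gf\in N[x]=J(R[x])$ and NJ-symmetry is \emph{not} violated. Thus the example here must be genuinely non-nil: the relations must annihilate the ordered product $fgh$ while leaving a non-nilpotent coefficient---e.g.\ a unit or a nonzero idempotent, the latter echoing the decisive role of $bac=e_{22}$ in Example \ref{mat}---in the rotated product $gfh$. I would therefore take $R$ to be an appropriate $K$-algebra on generators subject to relations of this asymmetric type, with enough extra relations to guarantee semicommutativity of $R$ (so that $R$ is NJ-symmetric) but not so many as to force the offending coefficient of $gfh$ to be nilpotent.

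The verification then splits into three steps, carried out in order: first, check that $R$ is NJ-symmetric, by verifying semicommutativity directly from the relations (equivalently, by checking that $R/J(R)$ is reduced and applying Theorem \ref{rqj}); second, compute $fgh$ from the relations and confirm it is nilpotent; third, compute the relevant coefficient of $gfh$ and confirm it is not nilpotent, so that $gfh\notin J(R[x])=N[x]$. The main obstacle is the tension inherent in the first and third steps taken together: NJ-symmetry of $R$ forces every rotation of a nilpotent product back into $J(R)$, so the relations must be strong enough to keep $R$ itself NJ-symmetric yet weak enough that the polynomial variable genuinely enlarges the family of nilpotent ordered products (allowing $fgh$ to vanish) without dragging the rotated product $gfh$ into the nil ideal. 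Balancing these competing demands---and, concretely, proving semicommutativity of the hand-crafted base ring $R$ from its presentation---is where the bulk of the difficulty lies.
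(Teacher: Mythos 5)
There is a genuine gap: your proposal is a strategy outline, not a proof. The entire content of the theorem is the \emph{existence} of an NJ-symmetric ring $R$ and witnesses $f,g,h\in R[x]$ with $fgh\in N(R[x])$ but $gfh\notin J(R[x])$, and you never produce either the ring or the polynomials --- you explicitly defer the construction (``I would therefore take $R$ to be an appropriate $K$-algebra on generators subject to relations of this asymmetric type'') and acknowledge that proving semicommutativity of the hand-crafted ring while keeping a non-nilpotent coefficient in the rotated product ``is where the bulk of the difficulty lies.'' That deferred step is not a routine verification; it is the whole theorem. Your own structural warning shows why: in the standard semicommutative-but-not-polynomially-semicommutative examples the base ring is built from nil algebras, so every rotation of a vanishing product has nilpotent coefficients and lands in $N[x]\subseteq J(R[x])$, and it is far from clear that any elementary presentation can break this.

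It is worth noting that the paper sidesteps exactly this obstacle by \emph{not} exhibiting explicit witnesses. It takes $R=K+A$ where $A$ is a nonzero nil algebra over a countable field with $N^{*}(A[x])=0$ (a deep construction, cited from \cite{olwar}), so that $R$ is local, hence quasi-duo, hence NJ-symmetric by Theorem \ref{qnjs}. It then argues by contradiction: if both $R[x]$ and $(R[x])[y]$ were NJ-symmetric, Amitsur's theorem would give $J((R[x])[y])=I[y]$ with $I=N^{*}(R[x])=N^{*}(A[x])=0$, forcing $(R[x])[y]$ --- an NJ-symmetric ring with zero Jacobson radical --- to be symmetric, hence $2$-primal; this contradicts $0\neq A\subseteq N(R[x])$ while $N_{*}(R[x])\subseteq N^{*}(R[x])=0$. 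So the paper only concludes that one of $R$, $R[x]$ works, without saying which. Your use of Amitsur's description $J(R[x])=N[x]$ with $N$ nil is the right tool and parallels the paper's use of the same theorem, but without an actual construction (or an indirect existence argument of the paper's kind) the proposal does not establish the statement.
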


\begin{proof}
	Let $K$ be a countable field. By \cite[proof of Lemma 3.7]{olwar}, there exists a nonzero nil algebra $A$ over $K$ satisfying $N^{\ast}(A[x])=0$. Take $R=K+A$. It is evident that $R$ is a local ring with $J(R)=A$. Since local rings are left quasi-duo, $R$ is an NJ-symmetric ring ( by Theorem \ref{qnjs} (\ref{q}) ) and $N^{*}(R[x])=N^{*}(A[x])$. If $R[x]$ is not NJ-symmetric, then the result holds. If $R[x]$ is NJ-symmetric, then  we show that $(R[x])[y]$ is not NJ-symmetric. Assume, if possible, that $(R[x])[y]$ is NJ-symmetric. By \cite[Theorem 1]{radpol}, $J((R[x])[y])=I[y]$ for some nil ideal $I$ of $R[x]$, where $I=N^{\ast}(R[x])=N^{*}(A[x])=0$. Therefore, $J((R[x])[y])=0$. So, $(R[x])[y]$ is symmetric, which further implies that $R[x]$ is a symmetric ring. As symmetric rings are 2-primal,  $N(R[x])=N_*(R[x])$. But, this is a contradiction to the fact that $0\neq N(R)=A\subseteq N(R[x])$ and $N_*(R[x])\subseteq N^*(R[x])=N^*(A[x])=0$.
\end{proof}

%\section{Funding}\label{sec11}
%This research is supported by the Ministry of Human Resource Development, India.

%\section{Data Availability}
 %Not applicable.

\section{Declarations}\label{sec12}

\textbf{Conflict of interest} The authors have no conflict of interest to declare that are
relevant to the content of this study.


\begin{thebibliography}{widestlabel}
	\bibitem{radpol}
S. A. Amitsur, \textit{Radicals of polynomial rings}, Canad. J. Math., {8}, (1956), 355-361.  	

\bibitem{sp}
H. E. Bell,  A. Yaqub, \textit{On commutativity of semiperiodic rings}, Results Math., {53} (1-2) (2009), 19-26.


\bibitem{olwar}
W. Chen, \textit{On linearly weak Armendariz rings},  J. Pure Appl. Algebra, {219} (4) (2015), 1122-1130.




\bibitem{yh}
Y. Hirano,   D. V. Huynh, J. K.  Park, \textit{On rings whose prime radical contains all nilpotent elements of index two}, Arch. Math. {66} (5) (1996), 360-365. 

\bibitem{1s2id}
C. Huh,  S. H. Jang,  C. O. Kim,  Y. Lee, \textit{Rings whose maximal one-sided ideals are two-sided}, Bull. Korean Math. Soc., {39} (3) (2002), 411-422.
\bibitem{Basic}
C. Huh,  H. K. Kim,  N. K. Kim,  Y. Lee, \textit{Basic examples and extension of symmetric rings}, J. Pure Appl. Algebra, {202}  (2005), 154-167.

\bibitem{lambek}
J. Lambek, \textit{On the representation of modules by sheaves of factor modules}, Canad, Math. Bull., 14 (1971), 359-368.

\bibitem{morita}
K. Morita, \textit{Duality for modules and its applications to the theory of rings with minimum condition}, Sci. Rep. Tokyo Diagaku Sect. A, {6} (150) (1958), 83-142. 

\bibitem{owsr}
L. Ouyang,  H. Chen, \textit{On weak symmetric rings}, Comm. Algebra, {38} (2) (2010), 697-713.	



\bibitem{rnsf}
M. B. 	Rege, \textit{On von Neumann regular rings and SF-rings}, Math. Japonica, {31} (6) (1986), 927-936.

\bibitem{gwsr}
J. Wei, \textit{Generalized weakly symmetric rings}, J. Pure Appl. Algebra, {218} (2014), 1594-1603. 

\bibitem{oqdr}
H.P. Yu, \textit{On quasi-duo rings}, Glasgow Math. J., {37} (1) (1995), 21-31. 
\bibitem{wsa}
Wenxi, L., Cang, W., Liang, Z.: Weak symmetric rings and their applications, Acta Sci. Nat. Univ. Nankai., \textbf{55} (1), 22-30 (2022).
 \bibitem{lift}
  Nicholson, W. K., Lifting idempotents and exchange rings, Transection of the American MAthematical Society, \textbf{229}, 269-278 (1977).
  \bibitem{quasi}
 T. Y. Lam and Alex S. Dugas, Quasi-duo rings and stable range descent, \textbf{195} (2), 243-259 (2005).
 \bibitem{eui}
Camilo, V. P., Hu, H. P., {Exchange rings, units and idempotents}, Comm. Algebra, \textbf{22} (12), 4737-4749 (1994). 


\end{thebibliography}
\end{document}